 \title{Probabilistic low-rank matrix completion on finite alphabets}
\author{Jean Lafond \\
 Institut Mines-T\'el\'ecom\\
T\'el\'ecom ParisTech\\ CNRS LTCI\\
\texttt{\scriptsize jean.lafond@telecom-paristech.fr}
\And
Olga Klopp\\
CREST et MODAL'X \\ Universit\'e Paris Ouest\\
\texttt{\scriptsize Olga.KLOPP@math.cnrs.fr}
\And
\'Eric Moulines \\
 Institut Mines-T\'el\'ecom\\
T\'el\'ecom ParisTech\\ CNRS LTCI\\
\texttt{\scriptsize moulines@telecom-paristech.fr}
\And
Joseph Salmon\\
 Institut Mines-T\'el\'ecom\\
T\'el\'ecom ParisTech\\ CNRS LTCI\\
\texttt{\scriptsize joseph.salmon@telecom-paristech.fr}
}
\begin{document}

\maketitle

\begin{abstract}
The task of reconstructing a matrix given a sample of observed
entries is known as the \emph{matrix completion problem}. It arises in
a wide range of problems, including recommender systems, collaborative
filtering, dimensionality reduction, image processing, quantum physics or multi-class classification
to name a few. Most works have focused on recovering an unknown real-valued low-rank
matrix from randomly sub-sampling its entries.
Here, we investigate the case where the observations take a finite number of values, 
corresponding for examples to ratings in 
recommender systems or labels in multi-class classification.
We also consider a general sampling scheme (not necessarily uniform) over the matrix entries.
The performance of a nuclear-norm penalized estimator is analyzed theoretically.
More precisely, we derive bounds for the Kullback-Leibler divergence between the true and estimated distributions.
In practice,  we have also proposed an efficient algorithm based on lifted coordinate gradient descent in order to tackle
potentially high dimensional settings.
\end{abstract}

\section{Introduction}
Matrix completion has attracted a lot of contributions over the past decade.
It consists in recovering the entries of a potentially high dimensional matrix,
based on their random and partial observations.
In the classical noisy matrix completion problem, 
the entries are assumed to be real valued and observed in presence of
additive (homoscedastic) noise.
In this paper, it is assumed that the entries take values in a finite alphabet that can model categorical data.
Such a problem arises in analysis of voting patterns, recovery of incomplete survey data
(typical survey responses are  true/false, yes/no or  do not know, agree/disagree/indifferent), 
quantum state tomography \cite{Gross11} (binary outcomes), 
recommender systems \cite{Koren_Bell_Volinsky09,Bobadilla_Ortega_Hernando_Gutierrez13} 
(for instance in common movie rating datasets,  \eg MovieLens or Neflix, ratings range from 1 to 5) 
among many others.
It is customary in this framework that rows represent individuals while columns represent items \eg movies,
survey responses, etc. 
Of course, the observations are typically incomplete, in the sense that a significant proportion of the entries are
missing. Then, a crucial question to be answered is whether it is possible to predict the missing entries from
these partial observations.

Since the problem of matrix  completion is ill-posed in general, it is necessary to impose a low-dimensional structure
on the matrix, one particularly popular example being a low rank constraint.
The classical noisy matrix completion problem (real valued observations and additive noise),
can be solved provided that the unknown matrix is low rank, either exactly or approximately;
see \cite{Candes_Plan10,Keshavan_Montanari_Oh10,Koltchinskii_Tsybakov_Lounici11,Negahban_Wainwright12,Cai_Zhou13,Klopp14} 
and the references therein.
Most commonly used methods amount to solve a least square program under a rank constraint or a
convex relaxation of a rank constraint provided by the nuclear (or trace norm) \cite{Fazel02}.

The problem of probabilistic low rank matrix completion over a finite alphabet has received much less attention; 
see \cite{Todeschini_Caron_Chavent13,Davenport_Plan_VandenBerg_Wootters12,Cai_Zhou14} among others.
To the best of our knowledge, only the binary case (also referred to as the 1-bit matrix completion problem)
has been covered in depth. In \cite{Davenport_Plan_VandenBerg_Wootters12}, 
the authors proposed to model the entries as Bernoulli random variables whose success rate depend upon
the matrix to be recovered through a convex link function (logistic and probit functions being natural examples).
The estimated matrix is then obtained as a solution of a maximization of the log-likelihood of the observations
under an explicit low-rank constraint. Moreover, the sampling model proposed in
\cite{Davenport_Plan_VandenBerg_Wootters12} assumes that the entries are
sampled uniformly at random. Unfortunately, this condition
is not totally realistic in recommender system applications: in such a context some users are
more active than others and some popular items are rated more frequently.
Theoretically, an important issue is that the method from
\cite{Davenport_Plan_VandenBerg_Wootters12} requires the knowledge of an upper bound on the
nuclear norm or on the rank of the unknown matrix.

Variations on the 1-bit matrix completion was further considered in \cite{Cai_Zhou14}
where a max-norm (though the name is similar, this is different from the sup-norm) 
constrained minimization is considered. The method of \cite{Cai_Zhou14}
allows more general non-uniform samplings but still
requires an upper bound on the max-norm of the unknown matrix.

In the present paper we consider a penalized maximum log-likelihood method, in which the log-likelihood of the observations 
is penalized by the nuclear norm (\ie we focus on the Lagrangian version rather than on the constrained one).
We first establish an upper bound of the Kullback-Leibler divergence between the
true and the estimated distribution under general sampling distributions; see \autoref{sec:main-results} for details. 
One should note that our method only requires the knowledge of 
an upper bound on the maximum absolute value of the probabilities,
and improves upon previous results found in the literature.

Last but not least, we propose an efficient implementation of our statistical procedure, 
which is adapted from the lifted coordinate descent algorithm recently introduced in 
\cite{Dudik_Harchaoui_Malick12,Harchaoui_Juditsky_Nemirovski14}. 
Unlike other methods, this iterative algorithm is designed to solve the convex optimization and not
(possibly non-convex) approximated formulation as in \cite{Recht_Re13}. It also has the benefit that it does not
need to perform full/partial SVD (Singular Value Decomposition) at every iteration; see \autoref{sec:numerical-experiment}
for details.

\subsection*{Notation}
Define $m_1\wedge m_2\eqdef\min(m_1,m_2)$ and  $m_1 \vee m_2\eqdef \max (m_1,m_2)$.
We equip the set of $m_1 \times m_2$ matrices with real entries (denoted $\RR^{m_1 \times m_2}$) with the scalar product
$\langle \mat{X}|\mat{X'} \rangle \eqdef \tr(\mat{X}^\top \mat{X'})$.
 For a given matrix $\mat{X}\in \RR^{m_1 \times m_2}$ we write
$\|\mat{X}\|_\infty \eqdef \max_{i,j} |\mat{X}_{i,j}|$ and, for $q \geq 1$, we denote its Schatten $q$-norm by
 \begin{equation*}
  \|\mat{X}\|_{\sigma,q}\eqdef \left( \sum_{i=1}^{m_1\wedge m_2} \sigma_i(\mat{X})^q \right)^{1/q} \eqs,
 \end{equation*}
where $\sigma_i(\mat{X})$ are the singular values of $\mat{X}$ ordered in decreasing order 
(see \cite{Bhatia97} for more details on such norms).
The operator norm of $\mat{X}$ is given by $\|\mat{X}\|_{\sigma,\infty}\eqdef\sigma_1(\mat{X})$.
Consider two vectors of $\Class-1$ matrices 
$(\mat{X}^\class)_{\class=1}^{\Class-1}$ and $(\mat{X'}^\class)_{\class=1}^{\Class-1}$  such that 
for any $(k,l) \in [m_1]\times[m_2]$ we have
$\mat{X}^\class_{k,l}\geq 0$, $\mat{X'}^\class_{k,l}\geq 0$, $1-\sum_{\class=1}^{\Class-1}\mat{X}^\class_{k,l}\geq 0$
and $1-\sum_{\class=1}^{\Class-1}\mat{X'}^\class_{k,l}\geq 0$.
Their square Hellinger distance is
\begin{equation*}
\dhe^2(\mat{X},\mat{X'})\eqdef
\frac{1}{m_1m_2} \!\sum_{\substack{k \in [m_1] \\ l\in [m_2]}} \!\! \left[ \sum_{\class=1}^{\Class-1}\left( \sqrt{\mat{X}^\class_{k,l}}-\sqrt{\mat{X'}^\class_{k,l}} \right)^2
\!\!\!\!+ \!\! \left(\sqrt{1-\sum_{\class=1}^{\Class-1}\mat{X}^\class_{k,l}}-\sqrt{1-\sum_{\class=1}^{\Class-1}\mat{X'}^\class_{k,l}}\right)^2 \right]
\end{equation*}
and their Kullback-Leibler divergence is
\begin{equation*}
\KL{\mat{X}}{\mat{X'}}\eqdef\frac{1}{m_1m_2} \sum_{\substack{k \in [m_1] \\ l\in [m_2]}}
\left[ \sum_{\class=1}^{\Class-1} \mat{X}^\class_{k,l}\log{\frac{\mat{X}^\class_{k,l}}{\mat{X'}^\class_{k,l}} } + (1- \sum_{\class=1}^{\Class-1} \mat{X}^\class_{k,l})
\log{\frac{1- \sum_{\class=1}^{\Class-1} \mat{X}^\class_{k,l}}{1-\sum_{\class=1}^{\Class-1} \mat{X'}^\class_{k,l}} } \right] \eqs.
\end{equation*}

Given an integer $\Class>1$, a function $f:\RR^{\Class-1} \to \RR^{\Class-1}$ is called a $\Class$-link
function if for any $x\in \RR^{\Class-1}$ it satisfies  $f^\class(x) \geq 0$ 
for $\class\in [\Class-1]$ and $1-\sum_{\class=1}^{\Class-1} f^\class(x) \geq 0$.
For any collection of $\Class-1$ matrices $(\mat{X}^\class)_{\class=1}^{\Class-1}$, $f(\mat{X})$ denotes
the vector of matrices $(f(\mat{X})^\class)_{\class=1}^{\Class-1}$ such that $f(\mat{X})^\class_{k,l}=f(\mat{X}^\class_{k,l})$
for any $(k,l) \in [m_1] \times [m_2]$ and $\class \in [\Class-1]$.

\section{Main results}
\label{sec:main-results}
Let $\Class$ denote the cardinality of our finite alphabet, that is the number of classes of the logistic model
(\eg ratings have $\Class$  possible values or surveys $\Class$ possible answers). 
For a vector of $\Class-1$ matrices $X= (\mat{X^\class})_{\class=1}^{\Class-1}$
of $\RR^{m_1 \times m_2}$ and an index $\omega \in [m_1] \times [m_2]$, 
we denote by $X_\omega$ the vector $(X^\class_\omega)_{\class=1}^{\Class-1}$.
We consider an $\iid$ sequence $(\omega_i)_{1\leq i \leq n}$  over $[m_1] \times [m_2]$, with a probability distribution 
function $\Pi$
that controls the way the matrix entries are revealed. 
It is customary to consider the simple uniform sampling distribution over the set $[m_1] \times [m_2]$, 
though more general sampling schemes could be considered as well.
We observe $n$ independent random elements $(Y_i)_{1\leq i\leq n}\in [\Class]^n$.
The observations $(Y_1,\dots,Y_n)$ are assumed to be independent and to follow a multinomial distribution
with success probabilities given by
\begin{equation*}
  \PP(Y_i=\class)=f^\class(\mat{\tX^1}_{\omega_{i}},\dots ,\mat{\tX^{\Class-1}}_{\omega_{i}} ) \quad
  \class \in [\Class-1] \quad \text{and} \quad \PP(Y_i=\Class)=1-\sum_{\class=1}^{\Class-1} \PP(Y_i=\class)
\end{equation*}
where $\{ f^\class \}_{\class=1}^{\Class-1}$ is a $p$-link function 
and $\tX = (\tX^\class)_{\class=1}^{\Class-1}$  
is the vector of true (unknown) parameters we aim at recovering.
For ease of notation, we often write $\mat{\tX}_i$ instead of  $\mat{\tX}_{\omega_i}$.
Let us denote by $\Lik$ the (normalized) negative log-likelihood of the observations:
\begin{equation}
\label{eq:likelihood-multinomial}
   \Lik(\mat{X}) = -\frac{1}{n} \sum_{i=1}^{n} \left[
   \sum_{\class=1}^{\Class-1} \1_{\{Y_i=\class\}}\log\left(f^\class(\mat{X}_i)\right) 
   + \1_{\{Y_i=\Class\}} \log\left(1-\sum_{\class=1}^{\Class-1}f^\class(\mat{X}_i) \right) \right] \eqs,  
\end{equation}
For any $\gamma>0$ our proposed estimator is the following:
\begin{equation}
\label{eq:MinPb}
 \hat{\mat{X}}=\argmin_{\substack{\mat{X} \in (\RR^{m_1 \times m_2})^{\Class-1} \\
 \max_{\class \in [\Class-1]} \|\mat{X}^\class \|_{\infty}\leq \gamma }} \Obj(\mat{X}) \eqsp, \quad \text{where} \quad  \Obj(\mat{X})= \Lik(\mat{X}) +
 \lambda \sum \limits_{\class=1}^{\Class-1} \|\mat{X}^\class \|_{\sigma,1} \eqsp,
\end{equation}
with $\lambda>0$ being a regularization parameter controlling the rank of the estimator. 
In the rest of the paper we assume that the negative log-likelihood $\Lik$ is convex (this is the case for the multinomial logit function,
see for instance \cite{Boyd_Vandenberghe04}).


In this section we present two results controlling the estimation error of $\hat{\mat{X}}$
in the binomial setting (\ie\ when $\Class=2$).
Before doing so, let us introduce some additional notation and assumptions.
The score function (defined as the gradient of the negative log-likelihood)
taken at the true parameter $\tX$, is denoted by $\bar{\Sigma}\eqdef \nabla \Lik(\mat{\tX})$.
We also need the following constants depending on the link function $f$ and $\gamma > 0$:
\begin{align*}
M_\gamma= &\sup_{|x|\leq\gamma}2 |\log(f(x))| \eqsp,\\
L_\gamma= &\max\left(\sup_{|x|\leq\gamma} \frac{|f'(x)|}{f(x)},\sup_{|x|\leq\gamma} \frac{|f'(x)|}{1-f(x)}\right) \eqsp, \\
 K_\gamma= &\inf_{|x|\leq \gamma} \frac{f'(x)^2}{8f(x)(1-f(x))} \eqsp.
\end{align*}

In our framework, we allow for a general distribution for observing the coefficients.
However, we need to control deviations of the sampling mechanism from the uniform distribution
and therefore we consider the following assumptions.
\begin{assumption}
\label{A1}
There exists a constant $\mu \geq 1$ such that for all
indexes $(k,l) \in [m_1] \times [m_2]$
\begin{equation*}
\min_{k,l} (\pi_{k,l}) \geq 1/(\mu m_1 m_2)\eqsp.
\end{equation*}
with  $\pi_{k,l} \eqdef \Pi(\omega_1 = (k,l))$.
\end{assumption}
Let us define $C_l\eqdef\sum_{k=1}^{m_1} \pi_{k,l}$ (resp.
$R_k \eqdef \sum_{l=1}^{m_2} \pi_{k,l}$) for any $l\in[m_2]$ (resp. $k \in [m_1]$)
the probability of sampling a coefficient in column $l$ (resp. in row $k$).
\begin{assumption}
\label{A2}
There exists a constant $\Lc \geq 1$ such that
\begin{equation*}
 \max_{k,l}(R_k,C_l) \leq \Lc/(m_1 \wedge m_2) \eqsp,
\end{equation*}

\end{assumption}
Assumption \autoref{A1} ensures that each coefficient has a non-zero probability
of being sampled whereas \autoref{A2} requires that no column nor row is sampled with too high probability 
(see also \cite{Foygel_Salakhutdinov_Shamir_Srebro11,Klopp14}
for more details on this condition).

We define the sequence of matrices $(E_i)_{i=1}^{n}$ associated to the revealed coefficient $(\omega_i)_{i=1}^{n}$ by
$E_i\eqdef~e_{k_i} (e'_{l_i})^\top$ where $(k_i,l_i)=\omega_i$ and with
$(e_k)_{k=1}^{m_1}$ (\resp\ ($e'_l)_{l=1}^{ m_2}$) being the canonical basis of
$\RR^{m_1}$ (\resp\ $\RR^{m_2}$).
Furthermore, if $(\varepsilon_i)_{1\leq i \leq n}$ is a Rademacher sequence independent 
 from  $(\omega_i)_{i=1}^{n}$ and $(Y_i)_{1\leq i \leq n}$ we define
\begin{equation*}
 \Sigma_R\eqdef\frac{1}{n}\sum_{i=1}^{n} \varepsilon_i E_i \eqsp.
\end{equation*}
We can now state our first result. For completeness, the proofs can be found in the supplementary material.
\begin{theorem}
\label{th1}
 Assume \autoref{A1} holds,   $\lambda \geq~2\| \bar{\Sigma} \|_{\sigma,\infty}$ and $\| \tX \|_\infty \leq \gamma$.
 Then, with probability at least 
 $
 1-2/d
 $
 the Kullback-Leibler divergence between the true and estimated distribution 
is bounded by 
 \begin{equation*}
  \KL{f(\mat{\tX})}{f(\hat{\mat{X}})} \leq 8\max \left( \frac{\mu^2}{K_\gamma}m_1 m_2\rank(\mat{\tX}) 
  \left( \lambda^2 +  c^* L_\gamma^2(\PE \|\Sigma_R \|_{\sigma,\infty})^2\right) , \mu eM_\gamma\frac{\sqrt{\log(d)}}{n} \right),
 \end{equation*}
where
$ c^*$ is a universal constant.
\end{theorem}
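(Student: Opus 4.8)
The plan is to turn the optimality of $\hat{\mat X}$, via convexity of $\Lik$, into a deterministic inequality plus one stochastic remainder, to bound that remainder by empirical-process arguments, and to close the loop through the Hellinger distance. Throughout, set $\Delta=\hat{\mat X}-\mat{\tX}$ and let $\PE$ be the expectation with respect to $(\omega_i,Y_i)_{i\le n}$. Since $\|\mat{\tX}\|_\infty\le\gamma$, $\mat{\tX}$ is feasible for \eqref{eq:MinPb}, so minimality of $\hat{\mat X}$ gives the basic inequality $\Lik(\hat{\mat X})-\Lik(\mat{\tX})\le\lambda\bigl(\|\mat{\tX}\|_{\sigma,1}-\|\hat{\mat X}\|_{\sigma,1}\bigr)$. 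Let $\mathcal P$ be the usual projector built from the left/right singular subspaces of $\mat{\tX}$, so that $\rank(\mathcal P\Delta)\le 2\rank(\mat{\tX})$ and $\|\mat{\tX}\|_{\sigma,1}-\|\hat{\mat X}\|_{\sigma,1}\le\|\mathcal P\Delta\|_{\sigma,1}-\|\mathcal P^\perp\Delta\|_{\sigma,1}$. Combining this with convexity in the form $\langle\bar{\Sigma}\,|\,\Delta\rangle\le\Lik(\hat{\mat X})-\Lik(\mat{\tX})$ and with $|\langle\bar{\Sigma}\,|\,\Delta\rangle|\le\|\bar{\Sigma}\|_{\sigma,\infty}\|\Delta\|_{\sigma,1}\le(\lambda/2)\|\Delta\|_{\sigma,1}$ (using $\lambda\ge 2\|\bar{\Sigma}\|_{\sigma,\infty}$) yields the cone inequality $\|\mathcal P^\perp\Delta\|_{\sigma,1}\le 3\|\mathcal P\Delta\|_{\sigma,1}$, hence $\|\Delta\|_{\sigma,1}\le 4\sqrt{2\rank(\mat{\tX})}\,\|\Delta\|_{\sigma,2}$.

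Next I would move from the empirical to the population excess risk: adding and subtracting $\PE[\Lik(\hat{\mat X})-\Lik(\mat{\tX})]$ and using the basic inequality together with $\|\mat{\tX}\|_{\sigma,1}-\|\hat{\mat X}\|_{\sigma,1}\le\sqrt{2\rank(\mat{\tX})}\,\|\Delta\|_{\sigma,2}$ gives
\[
\PE\bigl[\Lik(\hat{\mat X})-\Lik(\mat{\tX})\bigr]\le\lambda\sqrt{2\rank(\mat{\tX})}\,\|\Delta\|_{\sigma,2}+\mathcal Z,
\]
where $\mathcal Z$ is the supremum of $\PE[\Lik(\mat X)-\Lik(\mat{\tX})]-(\Lik(\mat X)-\Lik(\mat{\tX}))$ over $\mat X$ with $\|\mat X\|_\infty\le\gamma$ (and $\|\mat X-\mat{\tX}\|_{\sigma,1}$ in a dyadic range, so as to later cover the data-dependent $\hat{\mat X}$ by peeling).

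Controlling $\mathcal Z$ is the heart of the argument and, I expect, the main obstacle. By symmetrization $\mathcal Z$ is dominated up to a factor $2$ by the Rademacher average of the same process; since $x\mapsto\log f(x)$ and $x\mapsto\log(1-f(x))$ are $L_\gamma$-Lipschitz on $[-\gamma,\gamma]$, the contraction principle replaces the loss increments by linear forms $\langle\Sigma_R\,|\,\mat X-\mat{\tX}\rangle$, and the duality between $\|\cdot\|_{\sigma,1}$ and $\|\cdot\|_{\sigma,\infty}$ bounds their expectation by $L_\gamma\,(\PE\|\Sigma_R\|_{\sigma,\infty})\,\|\mat X-\mat{\tX}\|_{\sigma,1}$. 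Since each summand of $\Lik$ is bounded in absolute value by a constant of order $M_\gamma$, McDiarmid's bounded-difference inequality shows $\mathcal Z$ exceeds its mean by at most a term of order $M_\gamma\sqrt{\log d}/n$ on an event of probability $\ge 1-1/d$, and a union bound over the dyadic slices of $\|\Delta\|_{\sigma,1}$ (costing a further $1/d$) makes the bound valid at $\hat{\mat X}$; invoking the cone inequality this gives, with probability at least $1-2/d$,
\[
\mathcal Z\le c\,L_\gamma\,(\PE\|\Sigma_R\|_{\sigma,\infty})\sqrt{2\rank(\mat{\tX})}\,\|\Delta\|_{\sigma,2}+c\,M_\gamma\,\frac{\sqrt{\log d}}{n}.
\]
The delicate points are obtaining a bound linear in $\|\Delta\|_{\sigma,1}$ over an unbounded model (handled by the $\|\cdot\|_\infty$ constraint and the peeling device) and extracting the sharp additive $M_\gamma\sqrt{\log d}/n$ term without losing logarithmic factors.

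Finally I would close the loop via Hellinger's distance. On the population side $\PE[\Lik(\mat X)-\Lik(\mat{\tX})]=\sum_{k,l}\pi_{k,l}D_{k,l}$, where $D_{k,l}$ is the Kullback--Leibler divergence between the two Bernoulli laws at $(k,l)$, so by \autoref{A1} this is at least $\mu^{-1}\KL{f(\mat{\tX})}{f(\mat X)}$. Moreover $\KL{f(\mat{\tX})}{f(\hat{\mat X})}\ge\dhe^2(f(\mat{\tX}),f(\hat{\mat X}))$, and a mean-value expansion of $x\mapsto\sqrt{f(x)}$ and $x\mapsto\sqrt{1-f(x)}$ at points of $[-\gamma,\gamma]$ combined with the definition of $K_\gamma$ gives $\dhe^2(f(\mat{\tX}),f(\hat{\mat X}))\ge (K_\gamma/(m_1m_2))\|\Delta\|_{\sigma,2}^2$. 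Writing $T=\KL{f(\mat{\tX})}{f(\hat{\mat X})}$, combining the two displays above with $T\le\mu\,\PE[\Lik(\hat{\mat X})-\Lik(\mat{\tX})]$ and $\|\Delta\|_{\sigma,2}\le(m_1m_2/K_\gamma)^{1/2}T^{1/2}$ yields a quadratic inequality $T\le BT^{1/2}+C$ with $B$ of order $\mu(\lambda+L_\gamma\PE\|\Sigma_R\|_{\sigma,\infty})(\rank(\mat{\tX})m_1m_2/K_\gamma)^{1/2}$ and $C$ of order $\mu M_\gamma\sqrt{\log d}/n$. Solving it (so $T\le\max(4B^2,2C)$) and bounding $(\lambda+L_\gamma\PE\|\Sigma_R\|_{\sigma,\infty})^2\le 2\lambda^2+2L_\gamma^2(\PE\|\Sigma_R\|_{\sigma,\infty})^2$ gives the stated bound, the numerical constants $8$ and $e$ resulting from bookkeeping of the factors.
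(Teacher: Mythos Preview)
Your proof follows essentially the same route as the paper's: basic inequality from optimality, the projector/cone condition to get $\|\Delta\|_{\sigma,1}\le 4\sqrt{2\rank(\mat{\tX})}\,\|\Delta\|_{\sigma,2}$, symmetrization and the contraction principle to bound the Rademacher average by $L_\gamma\,\EE\|\Sigma_R\|_{\sigma,\infty}\,\|\Delta\|_{\sigma,1}$, a peeling argument for uniformity, and finally the Hellinger--Frobenius comparison $K_\gamma\|\Delta\|_{\sigma,2}^2\le m_1m_2\,\KL{f(\mat{\tX})}{f(\hat{\mat X})}$ together with $\KL{}{}\le\mu\,\D$ to close a quadratic inequality in $\sqrt{T}$.

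The one substantive difference is the peeling variable. The paper peels on the \emph{expected excess risk} $\D\bigl(f(\mat{\tX}),f(\mat X)\bigr)$: on the slice $\{\D\le t\}$ it uses Massart's inequality with deviation $\eta t$, yielding failure probability $\exp(-n\eta^2t^2/(8M_\gamma^2))$; the probabilities over the slices $t=\alpha^l\beta$ then form a geometric series, and Case~2 ($\D\le\beta$) produces the second term of the $\max$ directly as $\mu\beta$. You instead peel on $\|\Delta\|_{\sigma,1}$ and use McDiarmid with a \emph{fixed} deviation. That works, but two details of your bookkeeping are off. First, with bounded differences of size $O(M_\gamma/n)$ per coordinate, McDiarmid gives a deviation of order $M_\gamma\sqrt{\log(d)/n}$, not $M_\gamma\sqrt{\log d}/n$; the latter happens to match what is printed in the theorem, but the paper's own proof actually yields $\mu\beta\asymp\mu M_\gamma\sqrt{\log(d)/n}$ in Case~2, so the $1/n$ in the statement appears to be a typo. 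Second, since your McDiarmid deviation does not shrink along the slices, a union bound over dyadic slices of $\|\Delta\|_{\sigma,1}$ costs a factor of order $\log(m_1m_2)$ in probability (or equivalently a $\sqrt{\log\log}$ inflation of the deviation), not merely an extra $1/d$; this is harmless for the result but your ``costing a further $1/d$'' is not quite right as stated. The paper's self-normalized peeling on $\D$ avoids this bookkeeping because the tail probabilities already sum geometrically.
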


Note that $\| \bar{\Sigma} \|_{\sigma,\infty}$ is stochastic and that its expectation $\EE \|\Sigma_R \|_{\sigma,\infty}$
is unknown. However, thanks to Assumption \autoref{A2} these quantities can be controlled. 

To ease notation let us also define $m\eqdef m_1 \wedge m_2 $, $M\eqdef m_1 \vee m_2 $  and $d\eqdef m_1+m_2$. 

\begin{theorem}
\label{th2}
Assume \autoref{A1} and \autoref{A2} hold and that $\| \tX \|_\infty \leq \gamma$. Assume in addition that $n \geq  2 m \log(d)/(9\Lc)$.
Taking $\lambda = 6L_\gamma\sqrt{ 2 \Lc \log(d)/(mn)}$, then with probability at least  $1-3/d$
the folllowing holds
  \begin{equation*}
  K_\gamma\frac{\|\tX-\hat{X}\|_{\sigma,2}^2}{m_1 m_2 }\leq
  \KL{f(\mat{\tX})}{f(\hat{\mat{X}})} \leq \max \left( \bar{c}\frac{\nu\mu^2L_\gamma^2}{K_\gamma}\frac{ M\rank(\mat{\tX}) \log(d)}{n} ,
  8\mu eM_\gamma\frac{\sqrt{\log(d)}}{n} \right),
 \end{equation*}
where $\bar{c}$ is a universal constant.
\end{theorem}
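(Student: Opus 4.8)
The plan is to derive \autoref{th2} from \autoref{th1} by turning the two stochastic quantities appearing there, $\|\bar{\Sigma}\|_{\sigma,\infty}$ and $\PE\|\Sigma_R\|_{\sigma,\infty}$, into explicit quantities under \autoref{A2}, and then to add the deterministic lower bound on the Kullback--Leibler divergence. I begin with the score. Writing $\bar{\Sigma}=\nabla\Lik(\mat{\tX})=\tfrac1n\sum_{i=1}^n\xi_iE_i$, where $\xi_i$ is the scalar score of the $i$-th observation evaluated at $\mat{\tX}_i$, one checks that $\xi_i$ is centred given $\omega_i$ and that $|\xi_i|\le L_\gamma$, by the very definition of $L_\gamma$ together with $\|\mat{\tX}\|_\infty\le\gamma$. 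Thus $\bar\Sigma$ is an average of $n$ independent, centred, uniformly bounded random matrices whose matrix variance is controlled, via \autoref{A2}, by
\begin{equation*}
\Big\|\sum_{i=1}^n\PE\big[\xi_i^2E_iE_i^\top\big]\Big\|_{\sigma,\infty}\vee\Big\|\sum_{i=1}^n\PE\big[\xi_i^2E_i^\top E_i\big]\Big\|_{\sigma,\infty}\le nL_\gamma^2\max_{k,l}(R_k,C_l)\le\frac{nL_\gamma^2\Lc}{m}\eqsp.
\end{equation*}
A rectangular matrix Bernstein inequality then gives, for an absolute constant and with probability at least $1-1/d$, a bound of the form $\|\bar\Sigma\|_{\sigma,\infty}\lesssim L_\gamma\sqrt{\Lc\log(d)/(mn)}+L_\gamma\log(d)/n$; the hypothesis $n\ge 2m\log(d)/(9\Lc)$ is exactly what makes the sub-Gaussian term dominate, and the numerical constants are chosen so that $\lambda=6L_\gamma\sqrt{2\Lc\log(d)/(mn)}$ satisfies $\lambda\ge 2\|\bar\Sigma\|_{\sigma,\infty}$ on this event, which is the assumption required to apply \autoref{th1}.

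Next I control $\PE\|\Sigma_R\|_{\sigma,\infty}$ for $\Sigma_R=\tfrac1n\sum_{i=1}^n\varepsilon_iE_i$: here $\|\varepsilon_iE_i\|_{\sigma,\infty}=1$ and the matrix variance is again at most $n\Lc/m$ by \autoref{A2}, so the expectation form of the same matrix Bernstein bound yields $\PE\|\Sigma_R\|_{\sigma,\infty}\lesssim\sqrt{\Lc\log(d)/(mn)}+\log(d)/n$, with the sample-size condition once more making the first term dominate. Hence $\lambda^2+c^*L_\gamma^2(\PE\|\Sigma_R\|_{\sigma,\infty})^2\lesssim L_\gamma^2\Lc\log(d)/(mn)$, and inserting this into \autoref{th1} together with $m_1m_2=mM$ turns the first term of its maximum into $\bar c\,\nu\mu^2L_\gamma^2K_\gamma^{-1}M\rank(\mat{\tX})\log(d)/n$ (the factor $\nu$ accounting for the constant $\Lc$ of \autoref{A2}), leaving the second term unchanged. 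Since $\PE\|\Sigma_R\|_{\sigma,\infty}$ is deterministic, intersecting the event of \autoref{th1} (probability $\ge 1-2/d$) with that of the previous step (probability $\ge 1-1/d$) yields the upper bound with probability at least $1-3/d$.

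It remains to prove the deterministic lower bound $K_\gamma\|\mat{\tX}-\hat{\mat{X}}\|_{\sigma,2}^2/(m_1m_2)\le\KL{f(\mat{\tX})}{f(\hat{\mat{X}})}$, which, since $\Class=2$, reduces to the entrywise inequality
\begin{equation*}
\KL{\mathrm{Ber}\big(f(\mat{\tX}_{k,l})\big)}{\mathrm{Ber}\big(f(\hat{\mat{X}}_{k,l})\big)}\ge K_\gamma\,\big(\mat{\tX}_{k,l}-\hat{\mat{X}}_{k,l}\big)^2\eqsp.
\end{equation*}
This follows from a second-order Taylor expansion of $b\mapsto\KL{\mathrm{Ber}(f(\mat{\tX}_{k,l}))}{\mathrm{Ber}(b)}$ around $b=f(\mat{\tX}_{k,l})$ (equivalently, from bounding the Bernoulli divergence below by the squared Hellinger distance and expanding that): the relevant modulus is the Fisher information $f'^2/(f(1-f))$ of the family $x\mapsto\mathrm{Ber}(f(x))$, and $K_\gamma$ is precisely a lower bound for (a constant multiple of) this quantity over $|x|\le\gamma$. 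Here one uses that $\|\mat{\tX}\|_\infty\le\gamma$ and $\|\hat{\mat{X}}\|_\infty\le\gamma$, so that the intermediate points produced by Taylor's formula and the mean-value theorem all lie in $[-\gamma,\gamma]$; averaging over $(k,l)$ gives the claim.

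The main obstacle is the spectral-norm control in the first two steps: obtaining bounds on $\|\bar\Sigma\|_{\sigma,\infty}$ and $\PE\|\Sigma_R\|_{\sigma,\infty}$ that remain sharp under the \emph{non-uniform} sampling \autoref{A2} (which is why the variance is tied to $\max_{k,l}(R_k,C_l)$ rather than to $1/(m_1m_2)$), and verifying carefully that the hypothesis $n\ge 2m\log(d)/(9\Lc)$ places one in the sub-Gaussian regime of Bernstein's inequality, so that the prescribed $\lambda$ dominates $2\|\bar\Sigma\|_{\sigma,\infty}$ with the stated probability. Propagating the absolute constants through this chain to reach the clean values $6$, $2/9$ and $3/d$ is where the real bookkeeping lies; the lower bound, by contrast, is an elementary pointwise computation.
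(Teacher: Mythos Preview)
Your proposal is correct and follows essentially the same route as the paper: matrix Bernstein under \autoref{A2} to control $\|\bar\Sigma\|_{\sigma,\infty}$ (with the sample-size condition putting you in the sub-Gaussian regime so that the chosen $\lambda$ works), an expectation-level Bernstein-type bound for $\EE\|\Sigma_R\|_{\sigma,\infty}$, substitution into \autoref{th1} with a union bound giving $1-3/d$, and the lower bound via the Hellinger/Fisher-information inequality (the paper's \autoref{HelFro}). The only cosmetic difference is that the paper packages the expectation bound on $\|\Sigma_R\|_{\sigma,\infty}$ as a separate moment lemma rather than invoking an ``expectation form of Bernstein'' directly.
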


\begin{remark*}
Let us compare the rate of convergence of \autoref{th2} with those obtained in previous works on 1-bit matrix completion.
 In \cite{Davenport_Plan_VandenBerg_Wootters12}, the parameter
  $\bar X$ is estimated by minimizing the negative log-likelihood
  under the constraints $\|\mat{X}\|_\infty \leq \gamma$ and $\|\mat{X}\|_{\sigma,1} \leq  \gamma
 \sqrt{rm_1m_2}$ for some $r>0$.
  Under the assumption that $\rank(\mat{\tX}) \leq r$, they could prove that
  \begin{equation*}
   \frac{\|\mat{\tX}-\hat{\mat{X}}\|^2_{\sigma,2}}{m_1m_2} \leq C_\gamma \sqrt{\frac{rd}{n}} \eqsp,
  \end{equation*}
 where $C_\gamma$ is a constant depending on $\gamma$ (see \cite[Theorem 1]{Davenport_Plan_VandenBerg_Wootters12}).
 This rate of convergence is slower than the rate of convergence given by \autoref{th2}. \cite{Cai_Zhou14} studied a max-norm
 constrained maximum likelihood estimate and obtained a rate of convergence similar to \cite{Davenport_Plan_VandenBerg_Wootters12}.
\end{remark*}

\section{Numerical Experiments}
\label{sec:numerical-experiment}
\textbf{Implementation} 
\quad For numerical experiments, data were simulated according to a multinomial logit distribution.
In this setting, an observation $Y_{k,l}$
associated to row $k$ and column $l$ is distributed as
$\PP(Y_{k,l}=\class)=f^\class( \mat{X}^1_{k,l},\dots ,\mat{X}^{\Class-1}_{k,l})$ where
\begin{equation}\label{eq:logitlink}
f^\class(x_1,\dots,x_{\Class-1}) =\exp(x_\class )\left(1+\sum \limits_{\class =1}^{\Class-1} \exp(x_\class )\right)^{-1},
\quad \text{ for } \class \in [\Class-1] \eqsp.
\end{equation}
With this choice, $\Lik$ is convex and problem \eqref{eq:MinPb} can be solved using convex optimization
algorithms. Moreover, following the advice of \cite{Davenport_Plan_VandenBerg_Wootters12} we
considered the unconstrained version of problem \eqref{eq:MinPb} (\ie with no constraint on $\|X\|_\infty$),
which reduces significantly the computation burden and has no significant impact on the solution in practice.
To solve this problem,
we have extended to the multinomial case the coordinate gradient
descent algorithm introduced by \cite{Dudik_Harchaoui_Malick12}.
This type of algorithm has the advantage, say over the Soft-Impute \cite{Mazumder_Hastie_Tibshirani10}
or the SVT \cite{Cai_Candes_Shen10}
algorithm, that it does not require the computation of a full SVD at each step
of the main loop of an iterative (proximal) algorithm (bare in mind that the proximal operator
associated to the nuclear norm is the soft-thresholding operator of the singular values).
The proposed version only computes the largest singular vectors and singular values.
This potentially decreases the computation by a factor close to the value of the upper bound on the rank commonly used
(see the aforementioned paper for more details).

Let us present the algorithm.
Any vector of $p-1$ matrices $\mat{X}=(\mat{X^\class})_{\class=1}^{\Class-1}$ 
is identified as an element of the tensor product space
$\RR ^{m_1 \times m_2}~\otimes~\RR ^{\Class-1}$ and denoted by:
\begin{equation}
\label{eq:rep_tens}
\mat{X}=\sum \limits_{\class=1}^{\Class-1} \mat{X}^\class \otimes e^\class \eqs,
\end{equation}
where again $(e^\class)_{\class=1}^{\Class-1}$ is the canonical basis on $\RR ^{\Class-1}$
and $\otimes$ stands for the tensor product.
The set of normalized rank-one matrices is denoted by
\begin{equation*}
 \tens:=\left\{ M \in \RR ^{m_1 \times m_2} |
 M=  uv^\top ~|~ \|u \|=\|v \|=1, u \in \RR^{m_1},v \in \RR^{m_2} \right\} \,.
\end{equation*}
Define  $\Theta$ the linear space of real-valued functions on $\tens$ with finite support, \ie\  $\theta(M)= 0$ except for a finite number of $M \in \tens$.
This space is equipped with the $\ell^1$-norm  $\| \theta \|_1= \sum_{M \in \tens} |\theta(M)|$.
Define by $\Theta_+$ the positive orthant, \ie\ the cone of functions $\theta \in \Theta$ such that $\theta(M) \geq 0$ for all $M \in \tens$.
Any tensor $\mat{X}$ can be associated with a vector $\theta=(\theta^1,\dots,\theta^{\Class-1}) \in \Theta^{\Class-1}_+$, \ie\
\begin{equation}
\label{eq:representation-canonique}
 \mat{X}= \sum_{\class=1}^{\Class-1} \sum_{M \in \tens} \theta^\class(M) M \otimes e^\class \eqs .
\end{equation}
Such representations are not unique, and among them, the one associated to
the SVD plays a key role, as we will see below.
For a given $X$ represented by \eqref{eq:rep_tens} and for any $\class \in \{1,\dots, \Class-1\}$, 
denote by $\{\sigma_k^\class \}_{k=1}^{n^\class}$
the (non-zero) singular values of the matrix $X^\class$
and $\{u_k^\class$,$v_k^\class\}_{k=1}^{n^\class}$ the associated singular vectors. Then,
$X$ may be expressed as
\begin{equation}
 \label{eq:decomposition-svd}
 X= \sum_{\class=1}^{\Class-1} \sum_{k=1}^{n^\class} \sigma_k^\class u_k^\class (v_k^\class)^{\top} \otimes e^\class \, .
\end{equation}
Defining $\theta^\class$ the function $\theta^\class(M)= \sigma_k^\class$ if
$M= u_k^\class (v_k^\class)^{\top}$, $k\in[ n^\class]$ and $\theta^\class(M)= 0$ otherwise,
one obtains a representation of the type given in Eq.~\eqref{eq:representation-canonique}.

Conversely, for any  $\theta=(\theta^1,\dots,\theta^{\Class-1}) \in \Theta^{\Class-1}$,  define the map
\begin{equation*}
 \wrec: \theta \to \wrec_\theta 
 :=\sum_{\class=1}^{\Class-1} W^j_\theta \otimes e^\class \quad \text{with}
 \quad W^j_\theta\eqdef\sum_{M \in \tens} \theta^\class(M) M  
\end{equation*}
and the auxiliary objective function
\begin{equation}
\label{eq:definition-Objb}
\Objb(\theta)= \lambda \sum_{\class=1}^{\Class-1} \sum_{M \in \tens} \theta^\class(M) + \Lik(\wrec_\theta) \eqs.
\end{equation}
The map $\theta \mapsto \wrec_\theta$ is a continuous linear map
from $(\Theta^{\Class-1}, \|\cdot\|_1)$ to $\RR^{m_1 \times m_2}~\otimes~\RR^{\Class-1}$, where
$\| \theta \|_1= \sum_{\class=1}^{\Class-1} \sum_{M \in \tens} |\theta^\class(M)|$.
In addition, for all $\theta \in \Theta^{\Class-1}_+$
\[
\sum_{\class=1}^{\Class-1} \|\mat{\wrec_\theta}^\class \|_{\sigma,1} \leq \| \theta \|_1 \eqs,
\]
and one obtains
$\|\theta\|_1= \sum_{\class=1}^{\Class-1} \| \mat{\wrec_\theta^\class} \|_{\sigma,1}$
when $\theta$ is the representation associated to the SVD decomposition.
An important consequence, outlined in \cite[Proposition~3.1]{Dudik_Harchaoui_Malick12},
is that the minimization of \eqref{eq:definition-Objb} 
is actually equivalent to the minimization of  \eqref{eq:MinPb}; see \cite[Theorem~3.2]{Dudik_Harchaoui_Malick12}.

The proposed coordinate gradient descent algorithm updates at each step the nonnegative finite support function $\theta$.
For $\theta \in \Theta$ we denote by $\supp(\theta)$ the support of $\theta$ and for $M \in \tens$,
by $\delta_M \in \Theta$ the Dirac function on $\tens$ satisfying $\delta_M(M)=1$ and $\delta_M(M')=0$ if $M' \neq M$.
In our experiments we have set to zero the initial $\theta_0$.

\begin{algorithm}[h]
\KwData{Observations: $Y$, tuning parameter $\lambda$\\
initial parameter: $\theta_0 \in  \Theta^{\Class-1}_+$;  tolerance: $\epsilon$;  maximum number of iterations: $K$ }
\KwResult{$\theta \in  \Theta^{\Class-1}_+$}
\textbf{Initialization:} $\theta \leftarrow \theta_0$, $k \leftarrow 0$\\
\While{$k\leq K$ \rm}{
\For{$\class=0$ to $\Class-1$}{
Compute top singular vectors pair of $\left(-\nabla \Lik(\mat{W_\theta})\right)_\class$: $u_\class$, $v_\class$
}
Let $g=\lambda +\min_{\class={ 1,\dots, \Class-1}} \pscal{\nabla\Lik}{u^\class(v^\class)^\top}$\\
\If{$g \leq -\epsilon/2$}{
  $(\beta_0,\dots,\beta_{\Class-1})=\displaystyle \argmin_{(b_0,\dots,b_{\Class-1}) 
  \in \RR_+^{\Class-1}} 
  \Objb\left(\theta+ (b_0 \delta_{u^0(v^0)^\top},\dots,b_{\Class-1} \delta_{u^{\Class-1}(v^{\Class-1})^\top})\right)$\\
  $\theta \leftarrow \theta+ (\beta_0 \delta_{u^0(v^0)^\top},\dots,\beta_{\Class-1} \delta_{u^{\Class-1}(v^{\Class-1})^\top})$\\
  $k\leftarrow k+1$
}
\Else{
  Let $g_{\rm max}=  \max_{\class\in [\Class-1]} \max_{u^\class(v^\class)^\top\in \supp(\theta^\class)}
  |\lambda + \pscal{\nabla\Lik}{u^\class(v^\class)^\top}|$ \\
  \If{$g_{\rm max} \leq \epsilon$}{
  \textbf{break}
  }\Else{
  $\theta \leftarrow \displaystyle \argmin_{\theta' \in \Theta_+^{\Class-1},
  \supp(\theta'^\class) \subset \supp(\theta^\class), \class \in [\Class-1]} \Objb(\theta')$\\
  $k\leftarrow k+1$
  }
}
}
\caption{Multinomial lifted coordinate gradient descent}
\label{alg:MCLGD}
\end{algorithm}
A major interest of Algorithm \ref{alg:MCLGD} is that it requires to store the value of the parameter
entries only for the indexes which are actually observed. Since in practice
the number of observations is much smaller than the total number of coefficients $m_1 m_2$, 
this algorithm is both memory and computationally efficient.
Moreover, using an SVD algorithm such as Arnoldi
iterations to compute the top singular values and vector
pairs (see \cite[Section 10.5]{Golub_VanLoan13} for instance)
allows us to take full advantage of gradient sparse structure.
Algorithm \ref{alg:MCLGD} was implemented in C and Table \ref{tab:exec} gives a rough idea of the execution 
time for the case
of two classes on a 3.07Ghz w3550 Xeon CPU (RAM 1.66 Go, Cache 8Mo).

\begin{table*}[t!]
\centering
\begin{tabular}{|l|c|c|c|}
   \hline
   \textbf{Parameter Size} & $10^3\times10^3$ &$3\cdot10^3\times3\cdot10^3$ &$10^4\times10^4$   \\
   \hline
   \textbf{Observations} & $10^5$ &$10^5$ & $ 10^7$   \\
   \hline
   \textbf{Execution Time (s.)}& $4.5$ & $52$ & $730$ \\
   \hline
\end{tabular}
\caption{Execution time of the proposed algorithm for the binary case.
}
\label{tab:exec}
\end{table*}

\textbf{Simulated experiments}
\quad To evaluate our procedure we have performed simulations for matrices with $\Class=2$ or $5$.
For each class matrix $\mat{X^j}$ we sampled uniformly five unitary vector pairs $(u_k^\class, v_k^\class)_{k=1}^{5}$. We have then generated matrices of rank equals to 5,
such that
\begin{equation*}
 \mat{X}^j=\Gamma\sqrt{m_1m_2} \sum \limits_{k=1}^{5}  \alpha_k u_k^\class( v_k^\class)^\top \eqs,
\end{equation*}
with $(\alpha_1,\dots,\alpha_5)=(2,1,0.5,0.25,0.1)$ and $\Gamma$ is a scaling factor. The $\sqrt{m_1m_2}$
factor,
guarantees that $\EE[\|X^\class\|_{\infty}]$ does not depend on the sizes of the problem $m_1$ and $m_2$.

We then sampled the entries uniformly and the 
observations according to a logit distribution given by Eq.~\eqref{eq:logitlink}.
We have then considered and compared the two following estimators both computed using Algorithm~\ref{alg:MCLGD}:
\begin{itemize}
 \item the logit version of our method (with the link function given by Eq.~\eqref{eq:logitlink})
 \item the Gaussian completion method (denoted by $\hat{X}^\mathcal{N}$), that consists in using the Gaussian log-likelihood instead of
the multinomial in
\eqref{eq:MinPb}, \ie using a classical squared Frobenius norm (the implementation being adapted mutatis mutandis). Moreover 
an estimation of the standard deviation is obtained by the classical analysis of the residue.
\end{itemize}

Contrary to the logit version, the Gaussian matrix completion does not directly recover the probabilities of observing a rating.
However, we can estimate this probability by the following quantity:
\begin{equation*}
{\PP}(\mat{ \hat{X}}_{k,l}^\mathcal{N}=\class)=F_{\mathcal{N}(0,1)}(p_{\class+1})-F_{\mathcal{N}(0,1)}(p_\class)
 \text{  with  } p_\class =
 \begin{cases}
 0 &\text{ if } \class =1\eqs,\\
\frac{\class-0.5-\hat{\mat{X}}^\mathcal{N}_{k,l}}{\hat{\sigma}} &\text{ if } 0<\class<\Class \\
1  &\text{ if }\class = \Class \eqs,
\end{cases}
\end{equation*}
where $F_{\mathcal{N}(0,1)}$ is the cdf of a zero-mean standard Gaussian random variable.

As we see on \autoref{fig:KL},
the logistic estimator outperforms the Gaussian for both cases $\Class=2$ and $\Class=5$ in terms of the Kullback-Leibler
divergence.
This was expected because the Gaussian
model allows uniquely symmetric distributions with the same variance for all the ratings,
which is not the case for logistic distributions. The choice of the $\lambda$ parameter has been set for
both methods by performing 5-fold cross-validation on a geometric grid of size $0.8 \log(n)$.

\autoref{tab:sim_bin_error} and \autoref{tab:sim_mult_error} summarize
the results obtained for a $900 \times 1350$ matrix respectively for $\Class=2$ and  $\Class=5$.
For both the binomial case $\Class=2$ and  the multinomial case $\Class=5$,
the logistic model slightly outperforms the Gaussian model.
This is partly due to the fact that in the multinomial case, some
ratings can have a multi-modal distribution.
In such a case, the Gaussian model is
unable to predict these ratings, because its distribution is necessarily centered around a single value and is not flexible
enough. For instance consider the case of a rating distribution with high probability of seeing 1 or 5,
low probability of getting 2, 3 and 4, where we observed both $1$'s and $5$'s.
The estimator based on a Gaussian model will tend to center its distribution around 2.5 and therefore misses the bimodal shape of the distribution.

\begin{figure}[ht!]
\centering
\begin{minipage}{0.47\textwidth}
 \includegraphics[width=1\textwidth]{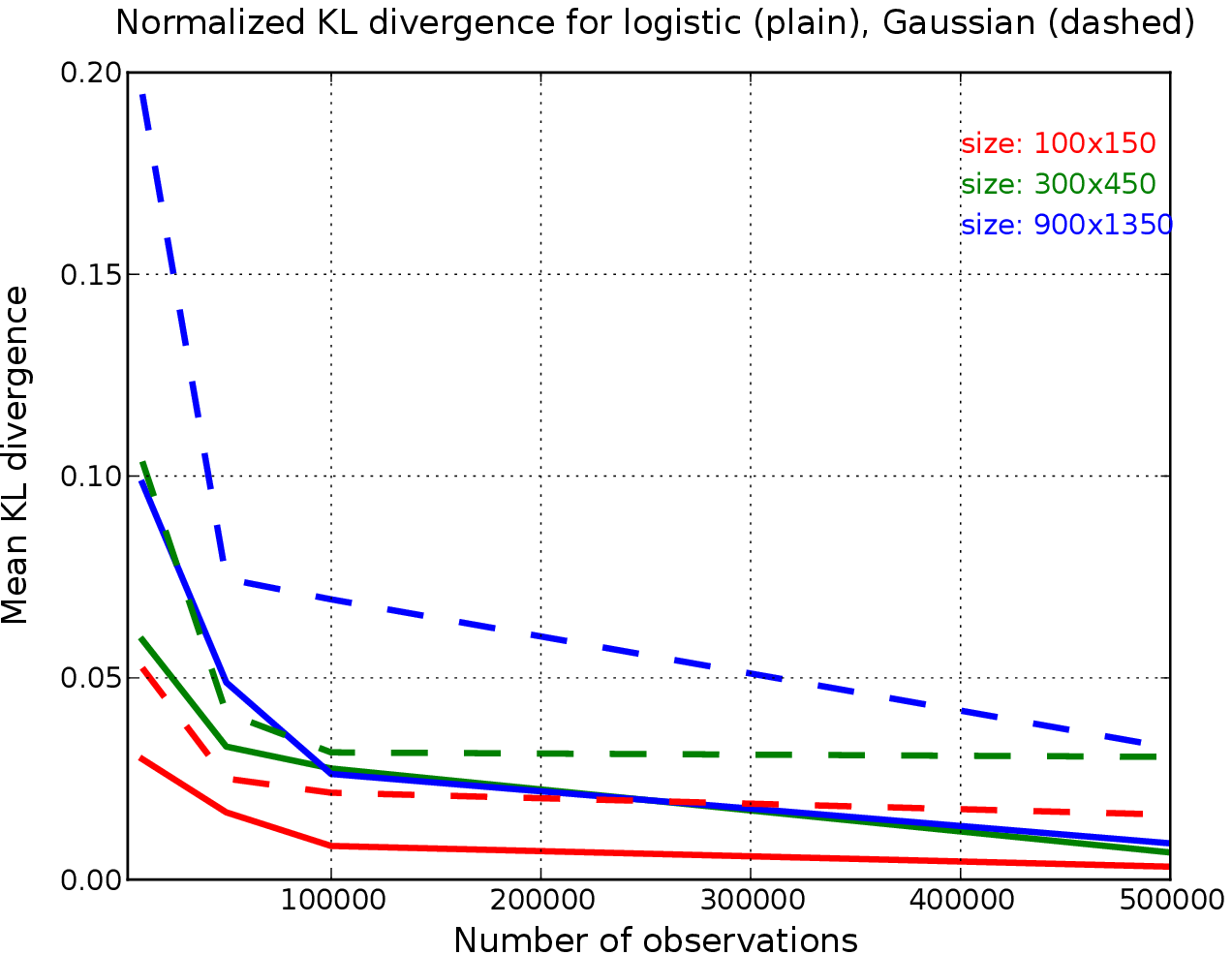}
\end{minipage}\hfill
\begin{minipage}{0.47\textwidth}
 \includegraphics[width=1\textwidth]{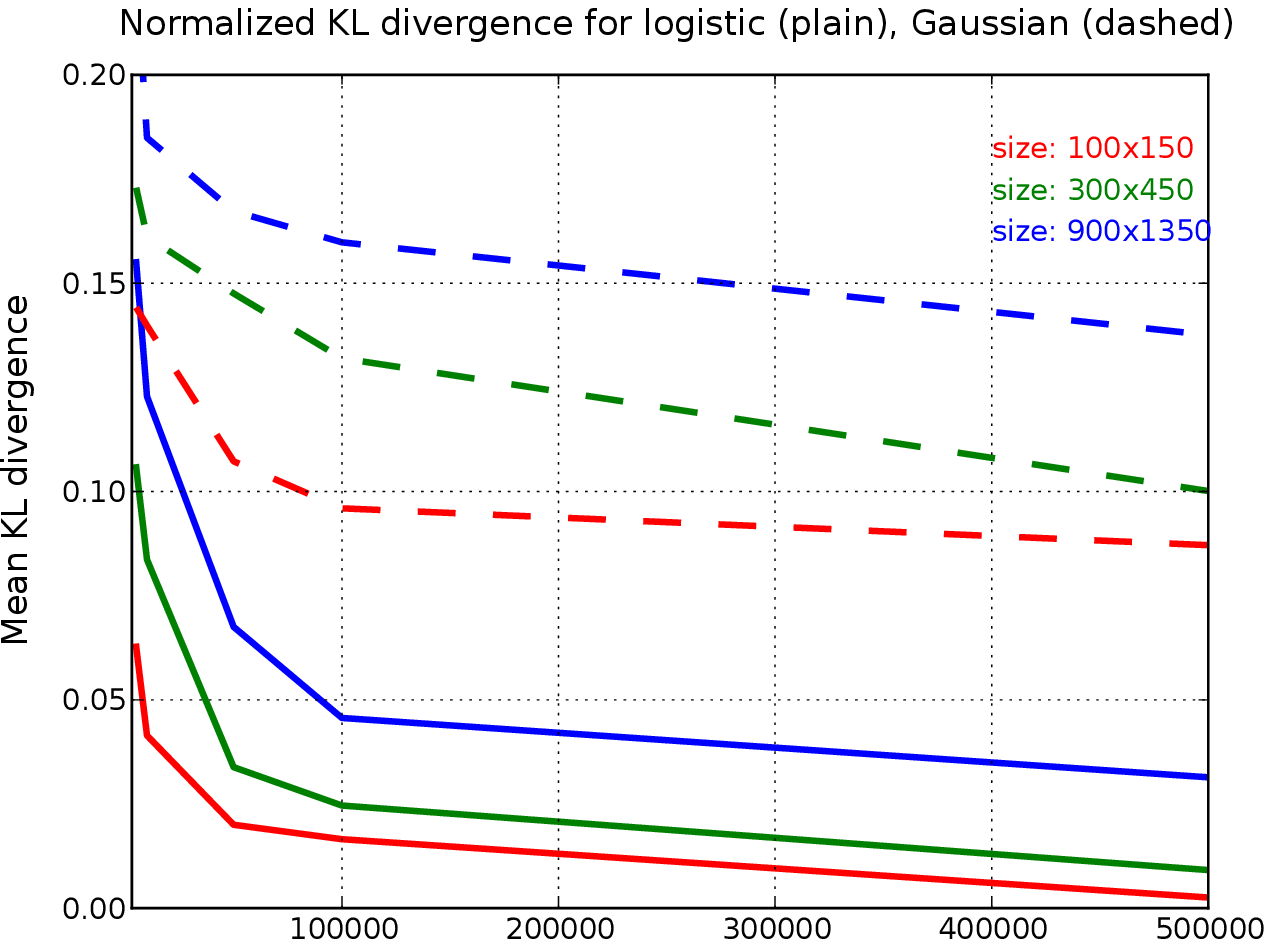}
\end{minipage}

\caption{ Kullback-Leibler divergence between the estimated and the true model
for different matrices sizes and sampling fraction, normalized by number of classes.
Right figure: binomial and Gaussian models ;
left figure: multinomial with five classes and Gaussian model. Results are averaged over five samples.}
\label{fig:KL}
\end{figure}

\begin{table*}[ht!]

\centering
\begin{tabular}{|l|c|c|c|c|}
   \hline
   \textbf{Observations} & $10\cdot 10^3$ &$50\cdot 10^3$ &$100\cdot 10^3$ &$500\cdot 10^3$   \\
   \hline
   \textbf{Gaussian prediction error} & $0.49$ &$0.34$ & $0.29$ & $0.26$  \\
   \hline
   \textbf{Logistic prediction error}& $0.42$ & $0.30$ & $0.27$ & $0.24$  \\
   \hline
\end{tabular}
\caption{Prediction errors for a binomial (2 classes) underlying model, for a $900 \times 1350$ matrix.}
\label{tab:sim_bin_error}
\end{table*}

\begin{table*}[h]

\centering
\begin{tabular}{|l|c|c|c|c|}
   \hline
   \textbf{Observations} & $10\cdot 10^3$ &$50\cdot 10^3$ &$100\cdot 10^3$ &$500\cdot 10^3$   \\
   \hline
   \textbf{Gaussian prediction error} & $0.78$ &$0.76$ & $0.73$ & $0.69$  \\
   \hline
   \textbf{Logistic prediction error}& $0.75$ & $0.54$ & $0.47$ & $0.43$  \\
   \hline
\end{tabular}
\caption{Prediction Error for a multinomial (5 classes) distribution against a $900 \times 1350$ matrix.}
\label{tab:sim_mult_error}
\end{table*}

\textbf{Real dataset}
\quad We have also run the same estimators on the MovieLens $100k$ dataset. In the case of real data
we cannot calculate the Kullback-Leibler divergence since no ground truth is available.
Therefore, to compare the prediction errors, we randomly selected $20\%$ of the entries as a test set,
and the remaining entries were split between a training set ($80\%$) and a validation set ($20\%$).

For this dataset, ratings range from $1$ to $5$. To consider the benefit of a binomial model, we have tested
each rating against the others (\eg ratings $5$ are set to $0$ and all others are set to $1$).
Interestingly we see that the Gaussian
prediction error is significantly better when choosing labels $-1$, $1$ instead of labels $0$, $1$. This is another motivation
for not using the Gaussian version: the sensibility to the alphabet choice seems to be crucial for the Gaussian version,
whereas the binomial/multinomial ones are insensitive to it.
These results are summarized in table \ref{tab:ml_bin}.

\begin{table*}[h]
\centering
\begin{tabular}{|l|c|c|c|c|c|}
   \hline
   \textbf{Rating} & $1$ &$2$ &$3$ &$4$ & $5$   \\
   \hline
   \textbf{Gaussian prediction error (labels $-1$ and $1$)} & $0.06$ &$0.12$ & $0.28$ & $0.35$ &$0.19$  \\
   \hline
   \textbf{Gaussian prediction error (labels $0$ and $1$)} & $0.12$ &$0.20$ & $0.39$ & $0.46$ &  $0.30$\\
   \hline
   \textbf{Logistic prediction error}& $0.06$ &$0.11$ & $0.27$ & $0.34$ &$0.20$  \\
   \hline
\end{tabular}
\caption{Binomial prediction error when performing one versus the others procedure on the MovieLens $100k$
dataset.}
\label{tab:ml_bin}
\end{table*}

\section{Conclusion and future work}

We have proposed a new nuclear norm penalized maximum
log-likelihood estimator and have provided strong theoretical
guarantees on its estimation accuracy in the binary case. 
Compared to previous works on
1-bit matrix completion, our method has some important advantages.
First, it works under quite mild assumptions on the sampling
distribution. Second, it requires only an upper bound on the maximal
absolute value of the unknown matrix. 
Finally, the rates of convergence given by Theorem~\ref{th2} 
are faster than the rates of convergence obtained in \cite{Davenport_Plan_VandenBerg_Wootters12} and
\cite{Cai_Zhou14}.	
In future work, we could consider the extension to more general data fitting terms, 
and to possibly generalize the results to tensor formulations,
or to penalize directly the nuclear norm of the matrix probabilities themselves.

\subsection*{Acknowledgments}
Jean Lafond is grateful for fundings from the Direction G\'en\'erale de l'Armement (DGA) and 
to the labex LMH through the grant no ANR-11-LABX-0056-LMH in the framework of the
"Programme des Investissements d'Avenir". Joseph Salmon acknowledges 
Chair Machine Learning for Big Data for partial financial support. The authors would also like to thank
Alexandre Gramfort for helpful discussions.

\bibliographystyle{plain}
\bibliography{references_all}

\begin{thebibliography}{10}

\bibitem{Bhatia97}
R.~Bhatia.
\newblock {\em Matrix analysis}, volume 169 of {\em Graduate Texts in
  Mathematics}.
\newblock Springer-Verlag, New York, 1997.

\bibitem{Bobadilla_Ortega_Hernando_Gutierrez13}
J.~Bobadilla, F.~Ortega, A.~Hernando, and A.~{Guti\'errez}.
\newblock Recommender systems survey.
\newblock {\em Knowledge-Based Systems}, 46(0):109 -- 132, 2013.

\bibitem{Boyd_Vandenberghe04}
S.~Boyd and L.~Vandenberghe.
\newblock {\em Convex optimization}.
\newblock Cambridge University Press, Cambridge, 2004.

\bibitem{Cai_Candes_Shen10}
{\relax J-F}.~Cai, E.~J. Cand{\`e}s, and Z.~Shen.
\newblock A singular value thresholding algorithm for matrix completion.
\newblock {\em SIAM Journal on Optimization}, 20(4):1956--1982, 2010.

\bibitem{Cai_Zhou13}
T.~T. Cai and {\relax W-X}.~Zhou.
\newblock Matrix completion via max-norm constrained optimization.
\newblock {\em CoRR}, abs/1303.0341, 2013.

\bibitem{Cai_Zhou14}
T.~T. Cai and {\relax W-X}.~Zhou.
\newblock A max-norm constrained minimization approach to 1-bit matrix
  completion.
\newblock {\em J. Mach. Learn. Res.}, 14:3619--3647, 2013.

\bibitem{Candes_Plan10}
E.~J. Cand{\`e}s and Y.~Plan.
\newblock Matrix completion with noise.
\newblock {\em Proceedings of the IEEE}, 98(6):925--936, 2010.

\bibitem{Davenport_Plan_VandenBerg_Wootters12}
M.~A. Davenport, Y.~Plan, E.~van {den Berg}, and M.~Wootters.
\newblock 1-bit matrix completion.
\newblock {\em CoRR}, abs/1209.3672, 2012.

\bibitem{Dudik_Harchaoui_Malick12}
M.~Dud{\'i}k, Z.~Harchaoui, and J.~Malick.
\newblock Lifted coordinate descent for learning with trace-norm
  regularization.
\newblock In {\em AISTATS}, 2012.

\bibitem{Fazel02}
M.~Fazel.
\newblock {\em Matrix rank minimization with applications}.
\newblock PhD thesis, Stanford University, 2002.

\bibitem{Foygel_Salakhutdinov_Shamir_Srebro11}
R.~Foygel, R.~Salakhutdinov, O.~Shamir, and N.~Srebro.
\newblock Learning with the weighted trace-norm under arbitrary sampling
  distributions.
\newblock In {\em NIPS}, pages 2133--2141, 2011.

\bibitem{Golub_VanLoan13}
G.~H. Golub and C.~F. {van Loan}.
\newblock {\em Matrix computations}.
\newblock Johns Hopkins University Press, Baltimore, MD, fourth edition, 2013.

\bibitem{Gross11}
D.~Gross.
\newblock Recovering low-rank matrices from few coefficients in any basis.
\newblock {\em Information Theory, IEEE Transactions on}, 57(3):1548--1566,
  2011.

\bibitem{Harchaoui_Juditsky_Nemirovski14}
Z.~Harchaoui, A.~Juditsky, and A.~Nemirovski.
\newblock Conditional gradient algorithms for norm-regularized smooth convex
  optimization.
\newblock {\em Mathematical Programming}, pages 1--38, 2014.

\bibitem{Keshavan_Montanari_Oh10}
R.~H. Keshavan, A.~Montanari, and S.~Oh.
\newblock Matrix completion from noisy entries.
\newblock {\em J. Mach. Learn. Res.}, 11:2057--2078, 2010.

\bibitem{Klopp11b}
O.~Klopp.
\newblock Rank penalized estimators for high-dimensional matrices.
\newblock {\em Electronic Journal of Statistics}, 5:1161--1183, 2011.

\bibitem{Klopp14}
O.~Klopp.
\newblock Noisy low-rank matrix completion with general sampling distribution.
\newblock {\em Bernoulli}, 2(1):282--303, 02 2014.

\bibitem{Koltchinskii_Tsybakov_Lounici11}
V.~Koltchinskii, A.~B. Tsybakov, and K.~Lounici.
\newblock Nuclear-norm penalization and optimal rates for noisy low-rank matrix
  completion.
\newblock {\em Ann. Statist.}, 39(5):2302--2329, 2011.

\bibitem{Koren_Bell_Volinsky09}
Y.~Koren, R.~Bell, and C.~Volinsky.
\newblock Matrix factorization techniques for recommender systems.
\newblock {\em Computer}, 42(8):30--37, 2009.

\bibitem{Ledoux_Talagrand91}
M.~Ledoux and M.~Talagrand.
\newblock {\em Probability in {B}anach spaces}, volume~23 of {\em Ergebnisse
  der Mathematik und ihrer Grenzgebiete (3) [Results in Mathematics and Related
  Areas (3)]}.
\newblock Springer-Verlag, Berlin, 1991.
\newblock Isoperimetry and processes.

\bibitem{Massart00}
P.~Massart.
\newblock About the constants in {T}alagrand's concentration inequalities for
  empirical processes.
\newblock {\em Ann. Probab.}, 28(2):863--884, 2000.

\bibitem{Mazumder_Hastie_Tibshirani10}
R.~Mazumder, T.~Hastie, and R.~Tibshirani.
\newblock Spectral regularization algorithms for learning large incomplete
  matrices.
\newblock {\em J. Mach. Learn. Res.}, 11:2287--2322, 2010.

\bibitem{Negahban_Wainwright12}
S.~Negahban and M.~J. Wainwright.
\newblock Restricted strong convexity and weighted matrix completion: optimal
  bounds with noise.
\newblock {\em J. Mach. Learn. Res.}, 13:1665--1697, 2012.

\bibitem{Recht_Re13}
B.~Recht and C.~R{\'e}.
\newblock Parallel stochastic gradient algorithms for large-scale matrix
  completion.
\newblock {\em Mathematical Programming Computation}, 5(2):201--226, 2013.

\bibitem{Todeschini_Caron_Chavent13}
A.~Todeschini, F.~Caron, and M.~Chavent.
\newblock Probabilistic low-rank matrix completion with adaptive spectral
  regularization algorithms.
\newblock In {\em NIPS}, pages 845--853, 2013.

\bibitem{Tropp12}
J.~A. Tropp.
\newblock User-friendly tail bounds for sums of random matrices.
\newblock {\em Found. Comput. Math.}, 12(4):389--434, 2012.

\bibitem{Tsybakov09}
A.~B. Tsybakov.
\newblock {\em Introduction to nonparametric estimation}.
\newblock Springer Series in Statistics. Springer, New York, 2009.

\end{thebibliography}

\section{Appendix}
\subsection{Proof of Theorem \ref{th1}}
\label{proofth1}

We consider a matrix $\mat{X}$ which satisfies $\Obj(\mat{X}) \leq \Obj(\mat{\tX})$, (\eg $X=\hat{\mat{X}}$).
Recalling $\rth:= (2 m_1 m_2\rank(\mat{\tX}))/K_\gamma$, we get from Lemma \ref{InegBeg}
\begin{equation}
\label{th1:ineg1}
\Lik(\mat{X})-\Lik(\mat{\tX})\leq \lambda \sqrt{\rth } \sqrt{\KL{f(\mat{\tX})}{f(\mat{X})}} \eqs.
\end{equation}
Let us define
\begin{equation}\label{eq:def_D}
\D \left( f(\mat{X'}),f(\mat{X}) \right):= \EE\left[\left(\Lik(\mat{X})-\Lik(\mat{X'})\right) \right]\eqs,
\end{equation}
where the expectation is taken both over the $(E_i)_{1 \leq i \leq n}$ and $(Y_i)_{1 \leq i \leq n}$.
As stated in Lemma \ref{D:K}, Assumption \ref{A1} implies  $\mu \D\left(f(\mat{\tX}),f(\mat{X})\right) \geq   \KL{f(\mat{\tX})}{f(\mat{X})}$.
We now need to control the left hand side of \eqref{th1:ineg1} uniformly over $X$  with high probability. Since we assume $\lambda \geq 2\|\bar{\Sigma}\|_{\sigma,\infty}$
applying \autoref{LemProj}~\eqref{NucKul}  and then \autoref{D:K} yields
\begin{equation}
\label{proof:th1ineq1}
 \|\mat{X}-\mat{\tX}\|_{\sigma,1}  \leq 4 \sqrt{\rth}\sqrt{\KL{f(\mat{\tX})}{f(\mat{X})}} \leq 4 \sqrt{\mu\rth} \sqrt{\D\left(f(\mat{\tX}),f(\mat{X})\right)} \eqs,
\end{equation}
Consequently, if we define $ \mathcal{C}(r)$ as
\begin{equation*}
 \mathcal{C}(r):=\left\{\mat{X} \in  \RR^{m_1 \times m_2} : \:\: \|\mat{X}\|_{\infty}\leq \gamma, \: \|\mat{X}-\mat{\tX}\|_{\sigma,1}^2\leq r \D\left(f(\mat{\tX}),f(\mat{X})\right) \right\}\eqs,
\end{equation*}
we need to control $(\Lik(\mat{X})-\Lik(\mat{\tX}))$ for $\mat{X} \in \mathcal{C}(16 \mu \rth)$.
For technical reasons, we have to ensure that $\D\left(f(\mat{\tX}),f(\mat{X})\right)$ is
greater than a given threshold $\beta >0$ and therefore we define the following set
\begin{equation*}
 \mathcal{C}_\beta(r)=\left\{ \mat{X} \in  \RR^{m_1 \times m_2} : \:\: \mat{X} \in \mathcal{C}(r), \: \D\left(f(\mat{\tX}),f(\mat{X})\right) > \beta \right\}\eqs.
\end{equation*}
We then distinguish the two following cases.\\
\textbf{Case 1}. If $\D\left(f(\mat{\tX}),f(\mat{X})\right) > \beta$, \eqref{proof:th1ineq1} gives $X \in \mathcal{C}_\beta(16\mu\rth)$.
Plugging \autoref{DevUnifCont} in \eqref{th1:ineg1} with $\beta=2M_\gamma \sqrt{\log(d)}/(\eta\sqrt{n\log(\alpha)})$
, $\alpha=e$ and $\eta=1/(4\alpha)$ with probability at least
$1-2d^{-1}/(1-d^{-1})\geq 1-2/d$ it holds
\begin{equation*}
\frac{\D\left(f(\mat{\tX}),f(\mat{X})\right)}{2}-\epsilon(16\mu\rth,\alpha,\eta) \leq \lambda \sqrt{\rth} \sqrt{\KL{f(\mat{\tX})}{f(\mat{X})}}\eqs,
\end{equation*}
where $\epsilon$ is defined in \autoref{DevUnifCont}.
Recalling \autoref{D:K} we get
 \begin{equation*}
  \frac{\KL{f(\mat{\tX})}{f(\mat{X})}}{2\mu} -\lambda \sqrt{\rth } \sqrt{\KL{f(\mat{\tX}}{f(\mat{X})}} -\epsilon(16\mu\rth,\alpha,\eta) \leq 0\eqs.
 \end{equation*}
An analysis of this second order polynomial and $\epsilon(16\mu\rth,\alpha,\eta)/\mu=\epsilon(16\rth,\alpha,\eta)$ leads to

\begin{equation}
 \label{th1bound1}
 \sqrt{\KL{f(\mat{\tX})}{f(\mat{X})}} \leq \mu  \left( \lambda \sqrt{\rth } + \sqrt{\lambda^2\rth + 2 \epsilon(16\rth,\alpha,\eta)}\right) \eqs,
\end{equation}
from which we derive the first bound of the \autoref{th1}.\\
\textbf{Case 2}. If $\D\left(f(\mat{\tX}),f(\mat{X})\right)\leq \beta$ then Lemma \ref{D:K} yields
\begin{equation}
 \label{th1bound2}
 \KL{f(\mat{\tX})}{f(\mat{X})}\leq \mu \beta \eqs.
\end{equation}
Combining \eqref{th1bound1} and \eqref{th1bound2} concludes the proof. \hfill $\qed$

\subsection{Proof of Theorem \ref{th2}}
\label{proofth2}
By \autoref{HelFro}, one only needs to prove the upper bound for the Kullback Leibler divergence.
The main points in proving \autoref{th2} is controlling $\| \bar{\Sigma}\|_{\sigma,\infty}$ and $\EE \|\Sigma_R \|_{\sigma,\infty}$. By definition
 \begin{equation*}
  \bar{\Sigma}=- \sum_{i=1}^{n}\left[ \left(\1_{\{Y_i=1\}} \frac{f'(\langle \mat{\tX}|E_i \rangle)}{f(\langle \mat{\tX}|E_i \rangle)}
  -\1_{\{Y_i=2\}}\frac{f'(\langle \mat{\tX}|E_i \rangle)}{1-f(\langle \mat{\tX}|E_i \rangle)}\right)E_i \right]\eqs.
 \end{equation*}
For $i\in [n]$, the matrices $Z_i:=(\1_{\{Y_i=1\}} \frac{f'(\langle \mat{\tX}|E_i \rangle)}{f(\langle \mat{\tX}|E_i \rangle)}
  -\1_{\{Y_i=2\}}\frac{f'(\langle \mat{\tX}|E_i \rangle)}{1-f(\langle \mat{\tX}|E_i \rangle)})E_i$ are independent, and satisfy $\EE[Z_i]=0$ as a score function.
  Moreover one can check that $\| \mat{Z_{i}} \|_{\sigma,\infty} \leq L_\gamma$. Noticing $E_{k,l} E_{k,l}^\top =E_{k,k}$  we also get
  \begin{multline*}
\sum_{i=1}^{n} \EE[ \mat{Z_{i}}\mat{Z_{i}}^\top ]=\\
    \sum_{k=1}^{m_1}\left( \sum_{l=1}^{m_2}\pi_{k,l} \left( f(\mat{\tX}_{k,l}) \frac{f'^2( \mat{\tX}_{k,l})}{f^2(\mat{\tX}_{k,l})}
   +(1-f(\mat{\tX}_{k,l})\frac{f'^2(\mat{\tX}_{k,l})}{(1-f(\mat{\tX}_{k,l}))^2} \right)\right)E_{k,k} \eqs,
  \end{multline*}
which is diagonal. Since $f$ takes value in $[0,1]$, for any $(k,l) \in [m_1] \times [m_2] $ it holds
\begin{equation*}
 f(\mat{\tX}_{k,l}) \frac{f'^2( \mat{\tX}_{k,l})}{f^2(\mat{\tX}_{k,l})}
   +(1-f(\mat{\tX}_{k,l})\frac{f'^2(\mat{\tX}_{k,l})}{(1-f(\mat{\tX}_{k,l}))^2} \leq L^2\gamma \eqs,
\end{equation*}
so that we obtain
\begin{equation*}
\left\| \EE[\frac{1}{n} \sum_{i=1}^{n}  \mat{Z_{i}}\mat{Z_{i}}^\top ]\right\|_{\sigma,\infty}\leq L_\gamma^2 \max_{l}(C_l) \leq L_\gamma^2 \frac{\Lc}{m} \eqs,
\end{equation*}
were we used Assumption \ref{A2} for the last inequality.
We show similarly that $\|\EE[\sum_{i=1}^{n} \mat{Z_{i}}^\top\mat{Z_{i}} ]\|_{\sigma,\infty}/n\leq L_\gamma^2 \Lc/m$.
Therefore, Proposition \ref{prop:bernstein} applied with $t=\log(d)$, $U=L_\gamma$ and $\sigma_Z^2 =L^2_\gamma\Lc/m$
yields with at least probability $1-1/d$,
 \begin{equation}
 \label{proof:th2ineq1}
  \|\bar{\Sigma}\|_{\sigma,\infty} \leq c^* L_\gamma\max \left\{ \sqrt{ \frac{2\Lc\log(d)}{mn}} , \frac{2}{3} \frac{\log(d)}{n} \right \} \eqs.
  \end{equation}
With the same analysis for $\Sigma_R:=\frac{1}{n}\sum_{i=1}^{n} \varepsilon_i E_i$ and by applying \autoref{lem:MatExp}
with $U=1$ and $\sigma_Z^2 =\frac{L}{m}$, for $n\geq n^* :=m\log(d)/(9 \Lc)$ it holds:
\begin{equation}
 \label{proof:th2ineq2}
  \EE \left[\|\Sigma_R  \|_{\sigma,\infty} \right] \leq c^{*}\sqrt{\frac{2e\Lc \log(d)}{mn}} \eqs.
\end{equation}
Assuming $n \geq 2 m \log(d)/(9L)$, implies $n \geq n^*$ and \eqref{proof:th2ineq2} is therefore satisfied.
Since it also implies $ \sqrt{2L\log(d)/(mn)} \geq 2\log(d)/(3n)$, the second term of \eqref{proof:th2ineq1} is negligible.
Consequently taking $\lambda:=2c^* L_\gamma\sqrt{2L\log(d)/(mn)}$ ensures that $\lambda \geq  2\|\bar{\Sigma}\|_{\sigma,\infty} $
with at least probability $1-1/d$.\\
Therefore
 by taking $\lambda$, $\beta$ and $n$  as in \autoref{th2} statement , with at least probability $1-3/d$,
 \autoref{th1} result holds when replacing  $\EE \|\Sigma_R  \|_{\sigma,\infty} $ by its upper bound \eqref{proof:th2ineq2}, which is
 exactly \autoref{th2} statement.
 \hfill $\qed$
\subsection{Linear Algebra and Distance Comparison}
We denote by $\mathcal{S}_1(\mat{X}) \subset \RR^{m_1}$ (\resp $\mathcal{S}_2(\mat{X}) \subset \RR^{m_2}$)
the linear spans generated by left (\resp right) singular vectors of $\mat{X}$.
$P_{\mathcal{S}^\bot_1(\mat{X})}$  (\resp $P_{\mathcal{S}^\bot_2(\mat{X})}$)
denote the orthogonal projections on $\mathcal{S}^\bot_1(\mat{X})$ (\resp $\mathcal{S}^\bot_2(\mat{X})$).
We then define the following orthogonal projections on $\RR^{m_1 \times m_2}$
\begin{equation*}
\Proj_{\mat{X}}^\bot:\mat{X'} \to P_{\mathcal{S}^\bot_1(\mat{X})}\mat{X'} P_{\mathcal{S}^\bot_2(\mat{X})}
\text{ and } \Proj_{\mat{X}}\mat{X'} \to \mat{X'}-\Proj_{\mat{X}}^\bot(\mat{X'})\eqs.
\end{equation*}

\begin{lemma}
\label{hel:kul}
 For any matrices $\mat{X},\mat{X'} \in \RR^{m_1 \times m_2}$ it holds:
\begin{equation*}
  \dhe^2(f(\mat{X}),f(\mat{X'})) \leq \KL{f(\mat{X})}{f(\mat{X'})}
\end{equation*}
\end{lemma}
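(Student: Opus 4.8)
The plan is to reduce the statement to the classical pointwise comparison $\dhe^2(P,Q)\le\KL{P}{Q}$ between two probability vectors and then average over the matrix entries. Fix $(k,l)\in[m_1]\times[m_2]$ and, writing $P^\class\eqdef f(\mat{X}^\class_{k,l})$ and $Q^\class\eqdef f(\mat{X'}^\class_{k,l})$ for $\class\in[\Class-1]$, set $P^\Class\eqdef 1-\sum_{\class=1}^{\Class-1}P^\class$ and $Q^\Class\eqdef 1-\sum_{\class=1}^{\Class-1}Q^\class$. Since $f$ is a $\Class$-link function, $(P^\class)_{\class=1}^{\Class}$ and $(Q^\class)_{\class=1}^{\Class}$ are genuine probability vectors on $[\Class]$, and with this notation the summand indexed by $(k,l)$ in the definition of $\dhe^2(f(\mat{X}),f(\mat{X'}))$ is exactly $\sum_{\class=1}^{\Class}(\sqrt{P^\class}-\sqrt{Q^\class})^2$, while the corresponding summand of $\KL{f(\mat{X})}{f(\mat{X'})}$ is $\sum_{\class=1}^{\Class}P^\class\log(P^\class/Q^\class)$. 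So it is enough to establish $\sum_{\class=1}^{\Class}(\sqrt{P^\class}-\sqrt{Q^\class})^2\le\sum_{\class=1}^{\Class}P^\class\log(P^\class/Q^\class)$; summing over $(k,l)$ and dividing by $m_1m_2$ then yields the lemma.

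For that pointwise bound I would first use $\sum_\class P^\class=\sum_\class Q^\class=1$ to rewrite the left-hand side as $2-2\sum_{\class=1}^{\Class}\sqrt{P^\class Q^\class}$. Then I would write $\sum_{\class}P^\class\log(P^\class/Q^\class)=-2\sum_{\class}P^\class\log\sqrt{Q^\class/P^\class}$, the sum being restricted to classes with $P^\class>0$, and apply the elementary inequality $\log t\le t-1$, valid for every $t>0$, to $t=\sqrt{Q^\class/P^\class}$. This gives $-2\sum_{\class}P^\class\log\sqrt{Q^\class/P^\class}\ge-2\sum_{\class}P^\class(\sqrt{Q^\class/P^\class}-1)=2-2\sum_{\class}\sqrt{P^\class Q^\class}$, which is precisely the right-hand side above. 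The only care needed is at the boundary of the simplex: if $Q^\class=0<P^\class$ for some $\class$ then the right-hand side of the lemma is $+\infty$ and there is nothing to prove; if $P^\class=0$, the convention $0\log 0=0$ lets that summand be dropped, and dropping it only weakens the Hellinger side (which gains the nonnegative term $Q^\class$), so the inequality is preserved.

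I do not anticipate a genuine obstacle here: this is the textbook fact that Hellinger distance is dominated by the Kullback--Leibler divergence, and the only mildly delicate points are matching the paper's entrywise $1/(m_1m_2)$ normalisation and being careful with vanishing coordinates when invoking $\log t\le t-1$.
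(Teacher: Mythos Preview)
Your argument is correct: reducing to the pointwise inequality on probability vectors and invoking $\log t\le t-1$ is exactly the standard textbook proof. The paper itself does not give a proof but simply cites \cite[Lemma~4.2]{Tsybakov09}, whose content is precisely what you wrote out; so your proposal is a self-contained version of the same approach.
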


\begin{proof}
See \cite[Lemma 4.2]{Tsybakov09}
\end{proof}

\begin{lemma}
\label{algebricres}
 For any matrix $\mat{X}$ and $\mat{X'}$ we have
 \begin{enumerate}[(i)]
  \item $ \|\mat{X} + \Proj_{\mat{X}}^\bot(\mat{X'}) \|_{\sigma,1}=\|\mat{X}\|_{\sigma,1} +  \|\Proj_{\mat{X}}^\bot(\mat{X'}) \|_{\sigma,1}\eqs,$ \label{PenRel}
  \item $\| \Proj_{\mat{X}}(\mat{X'}) \|_{\sigma,1} \leq \sqrt{2 \rank(\mat{X})} \|\mat{X'}\|_{\sigma,2}\eqs,$ \label{ProjRel}
  \item  $ \|\mat{X}\|_{\sigma,1}-\|\mat{X'}\|_{\sigma,1} \leq  \|\Proj_{\mat{X}}(\mat{X'}-\mat{X})\|_{\sigma,1}\eqs.$ \label{diffschatten}
  \end{enumerate}

\end{lemma}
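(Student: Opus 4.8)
The plan is to prove the three assertions in the order (i), (ii), (iii), since (iii) will follow from (i) together with the triangle inequality. Throughout, set $r=\rank(\mat{X})$ and abbreviate $P_1=P_{\mathcal{S}_1(\mat{X})}$, $P_1^\bot=P_{\mathcal{S}_1^\bot(\mat{X})}$, and similarly $P_2$, $P_2^\bot$ for the right singular subspace, so that $\Proj_{\mat{X}}^\bot(\mat{X'})=P_1^\bot\mat{X'}P_2^\bot$; expanding $\mat{X'}=(P_1+P_1^\bot)\mat{X'}(P_2+P_2^\bot)$ and cancelling the term $P_1^\bot\mat{X'}P_2^\bot$ also gives $\Proj_{\mat{X}}(\mat{X'})=P_1\mat{X'}+P_1^\bot\mat{X'}P_2$.

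For (i) I would invoke the elementary fact that if $A,B\in\RR^{m_1\times m_2}$ have orthogonal column spaces and orthogonal row spaces, i.e.\ $A^\top B=0$ and $AB^\top=0$, then $\|A+B\|_{\sigma,1}=\|A\|_{\sigma,1}+\|B\|_{\sigma,1}$. This is because $(A+B)^\top(A+B)=A^\top A+B^\top B$, and the ranges of the positive semidefinite matrices $A^\top A$ and $B^\top B$ (namely the row spaces of $A$ and $B$) are orthogonal, so the eigenvalue multiset of the sum is the union of the two eigenvalue multisets; taking square roots shows that the nonzero singular values of $A+B$ are precisely those of $A$ together with those of $B$. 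I then apply this with $A=\mat{X}$ and $B=\Proj_{\mat{X}}^\bot(\mat{X'})=P_1^\bot\mat{X'}P_2^\bot$: since $P_1^\bot\mat{X}=0$ we get $\mat{X}^\top B=\mat{X}^\top P_1^\bot\mat{X'}P_2^\bot=0$, and since $\mat{X}P_2^\bot=0$ we get $\mat{X}B^\top=\mat{X}P_2^\bot\mat{X'}^\top P_1^\bot=0$, so the hypotheses are met.

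For (ii), the identity $\Proj_{\mat{X}}(\mat{X'})=P_1\mat{X'}+P_1^\bot\mat{X'}P_2$ writes $\Proj_{\mat{X}}(\mat{X'})$ as a sum of two matrices of rank at most $r$ (the first has column space contained in the $r$-dimensional subspace $\mathcal{S}_1(\mat{X})$, the second has row space contained in $\mathcal{S}_2(\mat{X})$), hence $\rank(\Proj_{\mat{X}}(\mat{X'}))\leq 2r$. I then combine the Cauchy--Schwarz bound $\|A\|_{\sigma,1}\leq\sqrt{\rank(A)}\,\|A\|_{\sigma,2}$ with the observation that $\Proj_{\mat{X}}^\bot$, being idempotent and self-adjoint for the Frobenius inner product, is an orthogonal projection, so $\Proj_{\mat{X}}$ is too and therefore $\|\Proj_{\mat{X}}(\mat{X'})\|_{\sigma,2}\leq\|\mat{X'}\|_{\sigma,2}$; this yields $\|\Proj_{\mat{X}}(\mat{X'})\|_{\sigma,1}\leq\sqrt{2r}\,\|\mat{X'}\|_{\sigma,2}$.

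Finally, for (iii) I would decompose $\mat{X'}=\mat{X}+\Proj_{\mat{X}}^\bot(\mat{X'}-\mat{X})+\Proj_{\mat{X}}(\mat{X'}-\mat{X})$ and chain the reverse triangle inequality with (i) applied to the matrix $\mat{X'}-\mat{X}$:
\begin{align*}
\|\mat{X'}\|_{\sigma,1}
&\geq\|\mat{X}+\Proj_{\mat{X}}^\bot(\mat{X'}-\mat{X})\|_{\sigma,1}-\|\Proj_{\mat{X}}(\mat{X'}-\mat{X})\|_{\sigma,1}\\
&=\|\mat{X}\|_{\sigma,1}+\|\Proj_{\mat{X}}^\bot(\mat{X'}-\mat{X})\|_{\sigma,1}-\|\Proj_{\mat{X}}(\mat{X'}-\mat{X})\|_{\sigma,1}\\
&\geq\|\mat{X}\|_{\sigma,1}-\|\Proj_{\mat{X}}(\mat{X'}-\mat{X})\|_{\sigma,1},
\end{align*}
and rearranging gives the claim. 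The only step requiring any care is the singular-value bookkeeping underlying (i); everything else is routine linear algebra, so I do not anticipate a genuine obstacle.
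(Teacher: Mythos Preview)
Your proof is correct and complete. The paper itself omits the argument entirely (``The proof is straightforward and is left to the reader''), so there is no approach to compare against; the route you take---orthogonality of row and column spaces for (i), the rank bound plus Cauchy--Schwarz for (ii), and the decomposition combined with (i) for (iii)---is exactly the standard argument one would expect the reader to supply.
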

\begin{proof}
The proof is straightforward and is left to the reader.
\end{proof}

\begin{lemma}
\label{HelFro}
 For any $\gamma>0$, there exist a constant $K_\gamma>0$ such that for any $\mat{X},\mat{X'}  \in  \RR^{m_1 \times m_2}$
satisfying $\|\mat{X}\|_{\infty}\leq \gamma$ and $\|\mat{X'}\|_{\infty}\leq \gamma$, such that the following holds:
 \begin{equation*}
 \|\mat{X}-\mat{X'}\|^2_{\sigma,2} \leq \frac{m_1 m_2}{K_\gamma} \dhe^2(f(\mat{X}),f(\mat{X'}))\leq \frac{m_1 m_2}{K_\gamma} \KL{f(\mat{X})}{f(\mat{X'})} \eqs.
 \end{equation*}
\end{lemma}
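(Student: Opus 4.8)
The plan is to reduce the statement to a scalar inequality treated entrywise. The right-hand inequality $\dhe^2(f(\mat{X}),f(\mat{X'}))\le \KL{f(\mat{X})}{f(\mat{X'})}$ is exactly Lemma~\ref{hel:kul}, so I only need to establish the left-hand one. Since $\|\cdot\|_{\sigma,2}$ is the Frobenius norm we have $\|\mat{X}-\mat{X'}\|_{\sigma,2}^2=\sum_{k,l}(\mat{X}_{k,l}-\mat{X'}_{k,l})^2$, and in the binomial case $m_1m_2\,\dhe^2(f(\mat{X}),f(\mat{X'}))$ equals $\sum_{k,l}\big[(\sqrt{f(\mat{X}_{k,l})}-\sqrt{f(\mat{X'}_{k,l})})^2+(\sqrt{1-f(\mat{X}_{k,l})}-\sqrt{1-f(\mat{X'}_{k,l})})^2\big]$. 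Hence it suffices to prove that for every $a,b\in[-\gamma,\gamma]$,
\[
K_\gamma(a-b)^2\le\big(\sqrt{f(a)}-\sqrt{f(b)}\big)^2+\big(\sqrt{1-f(a)}-\sqrt{1-f(b)}\big)^2 ,
\]
and then sum over $(k,l)$ and divide by $m_1m_2$. If $K_\gamma=0$ there is nothing to prove, so I may assume $f$ is valued in $(0,1)$ on $[-\gamma,\gamma]$ and that $f'$ does not vanish there (so $K_\gamma>0$); these are the mild regularity conditions already implicit in the definitions of $M_\gamma$ and $L_\gamma$.

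The main device is to read the right-hand side as a squared chord length on the unit circle. I would set $\Phi(x):=(\sqrt{f(x)},\sqrt{1-f(x)})$, which lies on the unit circle, and write $\Phi(x)=(\cos\theta(x),\sin\theta(x))$ with $\theta(x):=\arccos\sqrt{f(x)}\in(0,\pi/2)$; then the bracket above is exactly $\|\Phi(a)-\Phi(b)\|_2^2=2-2\cos(\theta(a)-\theta(b))=4\sin^2\!\big(\tfrac{\theta(a)-\theta(b)}{2}\big)$. Differentiating gives $\theta'(x)=-f'(x)/\big(2\sqrt{f(x)(1-f(x))}\big)$, hence $\theta'(x)^2=f'(x)^2/\big(4f(x)(1-f(x))\big)=2\cdot f'(x)^2/\big(8f(x)(1-f(x))\big)\ge 2K_\gamma$ for $|x|\le\gamma$. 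By the mean value theorem there is $\xi$ strictly between $a$ and $b$, hence with $|\xi|\le\gamma$, such that $\theta(a)-\theta(b)=\theta'(\xi)(a-b)$, so that $(\theta(a)-\theta(b))^2\ge 2K_\gamma(a-b)^2$.

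It then remains to compare the chord with the angular gap. Since $\theta$ takes values in $(0,\pi/2)$ we have $|\theta(a)-\theta(b)|/2<\pi/4$, and on $[0,\pi/4]$ concavity of $\sin$ yields $\sin v\ge \tfrac{2\sqrt2}{\pi}\,v\ge \tfrac{1}{\sqrt2}\,v$, hence $\sin^2 v\ge v^2/2$. Combining, $\|\Phi(a)-\Phi(b)\|_2^2=4\sin^2\!\big(\tfrac{\theta(a)-\theta(b)}{2}\big)\ge \tfrac12(\theta(a)-\theta(b))^2\ge K_\gamma(a-b)^2$, which is the claimed entrywise bound. The only point needing a little care is the constant in this last $\sin$-inequality: the crude Jordan bound $\sin v\ge\tfrac2\pi v$ gives only $\sin^2 v\ge\tfrac4{\pi^2}v^2<v^2/2$ and would not close the argument, whereas the concavity (tangent/chord) bound on $[0,\pi/4]$ does — it is precisely the factor $8$ (rather than something smaller) in the definition of $K_\gamma$ that leaves the needed slack. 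I expect this chord-versus-arc comparison, together with disposing of the degenerate cases $K_\gamma=0$ and $f\in\{0,1\}$, to be the only slightly delicate part; everything else is bookkeeping.
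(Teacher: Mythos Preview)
Your proof is correct and recovers exactly the paper's constant $K_\gamma$, but the route is genuinely different. The paper first applies the elementary inequality $u^2+v^2\ge\tfrac12(u-v)^2$ with $u=\sqrt{f(a)}-\sqrt{f(b)}$ and $v=\sqrt{1-f(a)}-\sqrt{1-f(b)}$, which reduces matters to bounding $\big((\sqrt{f(a)}-\sqrt{1-f(a)})-(\sqrt{f(b)}-\sqrt{1-f(b)})\big)^2$; it then applies the mean value theorem to $g(x)=\sqrt{f(x)}-\sqrt{1-f(x)}$ and finishes with the one-line fact $\sqrt{u}+\sqrt{1-u}\ge 1$ on $[0,1]$. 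You instead parametrize $(\sqrt{f},\sqrt{1-f})$ by the circular angle $\theta=\arccos\sqrt{f}$, apply the mean value theorem to $\theta$, and close with the chord--arc bound $\sin v\ge v/\sqrt2$ on $[0,\pi/4]$. Both arguments boil down to one MVT step plus one elementary inequality; yours is more geometric and makes transparent why the factor $8$ in $K_\gamma$ is the right one (it balances the $2$ from $\theta'(x)^2\ge 2K_\gamma$ against the $\tfrac12$ from the chord bound), while the paper's version is slightly shorter and does not need to track the range of $\theta$ or argue separately about the sharpness of the $\sin$-inequality.
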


\begin{proof}
A proof is given in \cite[Lemma 2]{Davenport_Plan_VandenBerg_Wootters12} and we provide it here for self-completeness.
The second inequality is a consequence of the first one and \autoref{hel:kul}. Let $x,y \in [-\gamma,\gamma]$. We have
 \begin{multline*}
 \left(\sqrt{f(x)}-\sqrt{f(y)}\right)^2 +  \left(\sqrt{1-f(x)}-\sqrt{1-f(y)}\right)^2\geq  \\
\frac{1}{2} \left( \sqrt{f(x)}-\sqrt{1-f(x)}-\sqrt{f(y)} +\sqrt{1-f(y)} \right)^2\eqs.
 \end{multline*}

The mean value theorem applied to $x \rightarrow \sqrt{f(x)}-\sqrt{1-f(x)}$ implies the existence of $c_{x,y} \in [-\gamma,\gamma] $ such that
\begin{multline*}
 \left(\sqrt{f(x)}-\sqrt{f(y)}\right)^2 +   \left(\sqrt{1-f(x)}-\sqrt{1-f(y)}\right)^2=
 \\ \frac{\left(f'(c_{x,y})\right)^2}{8f(c_{x,y})(1-f(c_{x,y}))}\left(\sqrt{f(c_{x,y})}+\sqrt{1-f(c_{x,y})}\right)^2\left(x-y\right)^2\eqs,
\end{multline*}
The proof is concluded by noting that $u \rightarrow \sqrt{u}+\sqrt{1-u}$ is
lower bounded by $1$ on $[0,1]$.
\end{proof}

\begin{lemma}
 \label{InegBeg}
 Let $\mat{X},\mat{X'} \in \RR^{m_1 \times m_2}$ satisfying  $\Obj(\mat{X}) \leq \Obj(\mat{X'})$, then
 \begin{equation*}
\Lik(\mat{X})-\Lik(\mat{X'})\leq \lambda \sqrt{\frac{2m_1m_2 \rank(\mat{X'})}{K_\gamma} } \sqrt{\KL{f(\mat{X'})}{f(\mat{X})}} \eqs.
 \end{equation*}
\end{lemma}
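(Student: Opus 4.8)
The plan is to read the inequality off the definition of the objective $\Obj$ and then simply chain the three preparatory lemmas above. In the binomial case $\Class=2$ (the setting of \autoref{th1} and \autoref{th2}), $\Obj(\mat{X}) = \Lik(\mat{X}) + \lambda\|\mat{X}\|_{\sigma,1}$, so the hypothesis $\Obj(\mat{X})\leq\Obj(\mat{X'})$ rearranges immediately to
\[
\Lik(\mat{X}) - \Lik(\mat{X'}) \leq \lambda\bigl(\|\mat{X'}\|_{\sigma,1} - \|\mat{X}\|_{\sigma,1}\bigr)\eqsp.
\]
It therefore suffices to bound the penalty difference $\|\mat{X'}\|_{\sigma,1} - \|\mat{X}\|_{\sigma,1}$ by $\sqrt{2m_1m_2\rank(\mat{X'})/K_\gamma}\,\sqrt{\KL{f(\mat{X'})}{f(\mat{X})}}$.

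For this I would apply \autoref{algebricres}(iii) with the roles of $\mat{X}$ and $\mat{X'}$ interchanged, which gives $\|\mat{X'}\|_{\sigma,1} - \|\mat{X}\|_{\sigma,1} \leq \|\Proj_{\mat{X'}}(\mat{X}-\mat{X'})\|_{\sigma,1}$; that is, the difference of nuclear norms is controlled by the part of $\mat{X}-\mat{X'}$ living on the singular subspaces of the reference matrix $\mat{X'}$. Next, \autoref{algebricres}(ii) applied to the matrix $\mat{X}-\mat{X'}$ and the projection associated with $\mat{X'}$ yields $\|\Proj_{\mat{X'}}(\mat{X}-\mat{X'})\|_{\sigma,1} \leq \sqrt{2\rank(\mat{X'})}\,\|\mat{X}-\mat{X'}\|_{\sigma,2}$. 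Finally, since $\mat{X}$ and $\mat{X'}$ are feasible for \eqref{eq:MinPb} and hence satisfy $\|\mat{X}\|_\infty\leq\gamma$ and $\|\mat{X'}\|_\infty\leq\gamma$, \autoref{HelFro} applies and gives $\|\mat{X}-\mat{X'}\|_{\sigma,2}^2 \leq (m_1m_2/K_\gamma)\,\KL{f(\mat{X'})}{f(\mat{X})}$. Chaining these three bounds with the displayed inequality above produces exactly
\[
\Lik(\mat{X}) - \Lik(\mat{X'}) \leq \lambda\sqrt{\frac{2m_1m_2\rank(\mat{X'})}{K_\gamma}}\,\sqrt{\KL{f(\mat{X'})}{f(\mat{X})}}\eqsp.
\]

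I do not expect a genuine obstacle: the lemma is a direct corollary of the linear-algebra facts collected in \autoref{algebricres} together with the Hellinger-to-Frobenius comparison of \autoref{HelFro}. The only points requiring a moment of care are getting the direction right in \autoref{algebricres}(iii) — one must project onto the singular subspaces of $\mat{X'}$, the matrix whose nuclear norm is being upper-bounded, not onto those of $\mat{X}$ — and observing that the sup-norm constraints needed to invoke \autoref{HelFro} are automatically inherited from the feasible set in the definition of $\Obj$, so that $K_\gamma$ is indeed the relevant constant.
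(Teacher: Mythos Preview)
Your proof is correct and follows exactly the same route as the paper's: rearrange $\Obj(\mat{X})\leq\Obj(\mat{X'})$, then apply \autoref{algebricres}\,\eqref{diffschatten}, \autoref{algebricres}\,\eqref{ProjRel}, and finally \autoref{HelFro}. One minor caveat: the sup-norm bounds $\|\mat{X}\|_\infty,\|\mat{X'}\|_\infty\leq\gamma$ required by \autoref{HelFro} are not literally part of the lemma's hypotheses (and $\Obj$ itself does not encode the constraint), so they are being assumed implicitly here---the paper glosses over this in the same way, and in the only application both matrices are indeed feasible.
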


\begin{proof}
Since $\Obj(\mat{X}) \leq \Obj(\mat{X'})$, we obtain
 \begin{align*}
\Lik(\mat{X})-\Lik(\mat{X'}) \leq &\lambda (\|\mat{X'}\|_{\sigma,1}-\|\mat{X}\|_{\sigma,1})\leq \lambda \|\Proj_{\mat{X'}}(\mat{X}-\mat{X'})\|_{\sigma,1}
\eqs, \\
					    \leq &\lambda \sqrt{2 \rank(\mat{X'}) } \|\mat{X}-\mat{X'}\|_{\sigma,2}\eqs,\\
\end{align*}
where we have used \autoref{algebricres} \eqref{diffschatten} and \eqref{ProjRel} for the last two lines and
 \autoref{HelFro} and \autoref{hel:kul} to get the result.
\end{proof}

\begin{lemma}
\label{LemProj}
Let $\mat{X},\mat{X'} \in \RR^{m_1 \times m_2}$ satisfying $\|\mat{X}\|_{\infty}\leq \gamma$ and $\|\mat{X'}\|_{\infty}\leq \gamma$ and $\lambda>2\| \Sigma_Y(\mat{X'}) \|_{\sigma,\infty}$.
Assume that $\Obj(\mat{X}) \leq \Obj(\mat{X'})$. Then
  \begin{enumerate}[(i)]
 \item $\|\Proj_{\mat{X'}}^\bot(\mat{X}-\mat{X'})\|_{\sigma,1} \leq 3 \|\Proj_{\mat{X'}}(\mat{X}-\mat{X'})\|_{\sigma,1}\eqs,$ \label{projrel}
\item $\|\mat{X}-\mat{X'}\|_{\sigma,1}  \leq 4 \sqrt{2 \rank(\mat{X'})} \|(\mat{X}-\mat{X'})\|_{\sigma,2}\eqs,$\label{NucFob}
\item  $\|\mat{X}-\mat{X'}\|_{\sigma,1} \leq 4 \sqrt{ 2 m_1 m_2  \rank(\mat{X'})/ K_\gamma}   \dhe\left(f(\mat{X'}),f(\mat{X})\right)\eqs,$\label{NucHel}
 \item $\|\mat{X}-\mat{X'}\|_{\sigma,1} \leq 4 \sqrt{ 2 m_1 m_2  \rank(\mat{X'})/ K_\gamma }  \sqrt{\KL{f(\mat{X'})}{f(\mat{X})}}\eqs.$\label{NucKul}
\end{enumerate}
\end{lemma}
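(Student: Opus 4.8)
The plan is to run the standard cone/peeling argument for nuclear-norm penalized estimators: combine $\Obj(\mat{X})\le\Obj(\mat{X'})$ with convexity of $\Lik$ and the trace-norm identities of \autoref{algebricres}, which yields \eqref{projrel} and then \eqref{NucFob}, and finally convert the resulting Frobenius bound into a Hellinger bound via \autoref{HelFro} and into a Kullback--Leibler bound via \autoref{hel:kul}. All four items are then obtained from the same chain of inequalities.

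First I would use convexity of $\Lik$ to write $\Lik(\mat{X})-\Lik(\mat{X'})\ge\langle\Sigma_Y(\mat{X'})|\mat{X}-\mat{X'}\rangle$, while the hypothesis $\Obj(\mat{X})\le\Obj(\mat{X'})$ gives $\Lik(\mat{X})-\Lik(\mat{X'})\le\lambda(\|\mat{X'}\|_{\sigma,1}-\|\mat{X}\|_{\sigma,1})$. Chaining these two and using the duality $|\langle A|B\rangle|\le\|A\|_{\sigma,\infty}\|B\|_{\sigma,1}$ together with $\lambda>2\|\Sigma_Y(\mat{X'})\|_{\sigma,\infty}$, I obtain
\begin{equation*}
\lambda\bigl(\|\mat{X}\|_{\sigma,1}-\|\mat{X'}\|_{\sigma,1}\bigr)\le\tfrac{\lambda}{2}\,\|\mat{X}-\mat{X'}\|_{\sigma,1}\eqs.
\end{equation*}
Setting $\Delta:=\mat{X}-\mat{X'}$ and decomposing $\Delta=\Proj_{\mat{X'}}(\Delta)+\Proj_{\mat{X'}}^\bot(\Delta)$, \autoref{algebricres} \eqref{PenRel} gives $\|\mat{X'}+\Proj_{\mat{X'}}^\bot(\Delta)\|_{\sigma,1}=\|\mat{X'}\|_{\sigma,1}+\|\Proj_{\mat{X'}}^\bot(\Delta)\|_{\sigma,1}$, whence a triangle inequality yields $\|\mat{X}\|_{\sigma,1}-\|\mat{X'}\|_{\sigma,1}\ge\|\Proj_{\mat{X'}}^\bot(\Delta)\|_{\sigma,1}-\|\Proj_{\mat{X'}}(\Delta)\|_{\sigma,1}$. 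Writing $a:=\|\Proj_{\mat{X'}}^\bot(\Delta)\|_{\sigma,1}$, $b:=\|\Proj_{\mat{X'}}(\Delta)\|_{\sigma,1}$ and bounding $\|\Delta\|_{\sigma,1}\le a+b$ in the displayed inequality, I get $a-b\le\tfrac12(a+b)$, hence $a\le 3b$, which is exactly \eqref{projrel}.

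The remaining items follow by routine chaining. For \eqref{NucFob}: $\|\Delta\|_{\sigma,1}\le a+b\le 4b$ and \autoref{algebricres} \eqref{ProjRel} gives $b\le\sqrt{2\rank(\mat{X'})}\,\|\Delta\|_{\sigma,2}$. For \eqref{NucHel}: since $\|\mat{X}\|_\infty\le\gamma$ and $\|\mat{X'}\|_\infty\le\gamma$, \autoref{HelFro} yields $\|\Delta\|_{\sigma,2}\le\sqrt{m_1m_2/K_\gamma}\,\dhe(f(\mat{X'}),f(\mat{X}))$ (using symmetry of $\dhe$), which I plug into \eqref{NucFob}. For \eqref{NucKul}: apply \autoref{hel:kul}, $\dhe^2(f(\mat{X'}),f(\mat{X}))\le\KL{f(\mat{X'})}{f(\mat{X})}$. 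I do not expect a genuine obstacle; the only points needing care are the constant bookkeeping (the passage $a-b\le\tfrac12(a+b)\Rightarrow a\le 3b$ producing the factor $4$) and the fact that it is convexity of $\Lik$, not a first-order optimality condition, that is invoked — so the statement holds for \emph{any} feasible $\mat{X}$ satisfying $\Obj(\mat{X})\le\Obj(\mat{X'})$, in particular $\mat{X}=\hat{\mat{X}}$.
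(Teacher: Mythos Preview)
Your proposal is correct and follows essentially the same route as the paper's own proof: both combine $\Obj(\mat X)\le\Obj(\mat{X'})$ with convexity of $\Lik$ (the paper writes ``concavity'' but uses the convexity inequality), the trace identity of \autoref{algebricres}~\eqref{PenRel}, and duality to obtain $a-b\le\tfrac12(a+b)$, then chain \eqref{ProjRel}, \autoref{HelFro}, and \autoref{hel:kul} for the remaining items. The organization and constants match the paper's proof line for line.
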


\begin{proof}
We first prove \eqref{projrel}. Since $\Obj(\mat{X}) \leq \Obj(\mat{X'})$, we have
 \begin{equation*}
- ( \Lik(\mat{X}) -\Lik(\mat{X'}) ) \geq \lambda (\|\mat{X}\|_{\sigma,1}-\|\mat{X'}\|_{\sigma,1}).
 \end{equation*}
Writing $\mat{X}=\mat{X'}+\Proj_{\mat{X'}}^\bot(\mat{X}-\mat{X'})+\Proj_{\mat{X'}}(\mat{X}-\mat{X'})$ and using Lemma \ref{algebricres} \eqref{PenRel} and the triangular inequality we get
\begin{equation*}
 \|\mat{X}\|_{\sigma,1} \geq \|\mat{X'}\|_{\sigma,1} + \|\Proj_{\mat{X'}}^\bot(\mat{X}-\mat{X'})\|_{\sigma,1}-\|\Proj_{\mat{X'}}(\mat{X}-\mat{X'})\|_{\sigma,1} \eqs,
\end{equation*}
which implies
\begin{equation}
 \label{LemProj:lb}
- (\Lik(\mat{X})-\Lik(\mat{X'})) \geq  \lambda \left( \|\Proj_{\mat{X'}}^\bot(\mat{X}-\mat{X'})\|_{\sigma,1}-\|\Proj_{\mat{X'}}(\mat{X}-\mat{X'})\|_{\sigma,1} \right)\eqs.
\end{equation}
Furthermore by concavity of $\Lik$ we have
\begin{align*}
-(\Lik(\mat{X}) -\Lik(\mat{X'})) \leq \langle \Sigma_Y(\mat{X'})|\mat{X'}-\mat{X} \rangle \eqs.
\end{align*}
The duality between $\|\cdot\|_{\sigma,1}$ and $\|\cdot\|_{\sigma,\infty}$ (see for instance
\cite[Corollary IV.2.6]{Bhatia97}) leads to
\begin{align}
- (\Lik(\mat{X})-\Lik(\mat{X'})) &\leq \|\Sigma_Y(\mat{X'})\|_{\sigma,\infty}\|\mat{X'}-\mat{X} \|_{\sigma,1}\eqs,\nonumber \\
& \leq \frac{\lambda}{2}\|\mat{X'}-\mat{X} \|_{\sigma,1}\eqs, \nonumber \\
& \leq \frac{\lambda}{2} (\|\Proj_{\mat{X'}}^\bot(\mat{X}-\mat{X'})\|_{\sigma,1} + \|\Proj_{\mat{X'}}(\mat{X}-\mat{X'})\|_{\sigma,1})\eqs, \label{LemProj:ub}
\end{align}
where we used $\lambda>2\| \Sigma_Y(\mat{X'}) \|_{\sigma,\infty}$ in the second line. Then combining \eqref{LemProj:lb} with \eqref{LemProj:ub} gives \eqref{projrel}.\\
Since $\mat{X}-\mat{X'}=\Proj_{\mat{X'}}^\bot(\mat{X}-\mat{X'})+\Proj_{\mat{X'}}(\mat{X}-\mat{X'})$, using the triangular inequality and \eqref{projrel} yields
\begin{equation}
\label{LemProj:int}
 \|\mat{X}-\mat{X'}\|_{\sigma,1} \leq 4 \|\Proj_{\mat{X'}}(\mat{X}-\mat{X'})\|_{\sigma,1}.
\end{equation}
Combining \eqref{LemProj:int} and  \eqref{projrel} immediately leads to \eqref{NucFob} and \eqref{NucHel} is a consequence of  \eqref{NucFob} and \autoref{HelFro}. The
statement \eqref{NucKul} follows from \eqref{NucHel} and \autoref{hel:kul}.
\end{proof}

\subsection{Likelihood Deviation}

\begin{lemma}
\label{D:K}
 Under Assumption \ref{A1} we have
 \begin{equation*}
 \D\left(f(\mat{\tX}),f(\mat{X})\right) \geq  \frac{1}{\mu} \KL{f(\mat{\tX})}{f(\mat{X})} \eqs.
\end{equation*}
where $\D(\cdot,\cdot)$ is defined in Eq.~\eqref{eq:def_D}.
\end{lemma}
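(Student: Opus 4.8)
The plan is to evaluate the expectation defining $\D$ in closed form, recognize it as a $\Pi$-weighted average of the per-entry Kullback--Leibler divergences, and then use the lower bound on the sampling probabilities from \autoref{A1} together with the nonnegativity of each per-entry divergence.

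\textbf{Step 1: compute $\EE[\Lik(\mat{X})]$.} Recall that $\D(f(\mat{\tX}),f(\mat{X}))=\EE[\Lik(\mat{X})-\Lik(\mat{\tX})]$, where the expectation is over the sampling variables $(\omega_i)_{1\le i\le n}$ (equivalently $(E_i)_{1\le i\le n}$) and the labels $(Y_i)_{1\le i\le n}$. I would condition on $(\omega_i)_{1\le i\le n}$ first. Given $\omega_i=(k,l)$, the variable $Y_i$ is multinomial with $\EE[\1_{\{Y_i=\class\}}\mid\omega_i=(k,l)]=f^\class(\mat{\tX}_{k,l})$ for $\class\in[\Class-1]$ and $\EE[\1_{\{Y_i=\Class\}}\mid\omega_i=(k,l)]=1-\sum_{\class=1}^{\Class-1}f^\class(\mat{\tX}_{k,l})$. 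Substituting into the definition \eqref{eq:likelihood-multinomial} of $\Lik$ and then averaging over $\omega_i\sim\Pi$ (the $\omega_i$ being i.i.d., every index $i$ contributes the same term and the factor $1/n$ cancels the $n$ summands), one gets
\[
\EE[\Lik(\mat{X})]=-\sum_{k,l}\pi_{k,l}\Bigl[\sum_{\class=1}^{\Class-1}f^\class(\mat{\tX}_{k,l})\log f^\class(\mat{X}_{k,l})+\bigl(1-\textstyle\sum_{\class}f^\class(\mat{\tX}_{k,l})\bigr)\log\bigl(1-\textstyle\sum_{\class}f^\class(\mat{X}_{k,l})\bigr)\Bigr].
\]

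\textbf{Step 2: identify $\D$ and conclude.} Subtracting the same expression evaluated at $\mat{X}=\mat{\tX}$ cancels all terms not involving $\log(\cdot/\cdot)$ and yields
\[
\D\bigl(f(\mat{\tX}),f(\mat{X})\bigr)=\sum_{k,l}\pi_{k,l}\,D_{k,l},
\]
where $D_{k,l}$ is exactly the $(k,l)$-th summand appearing in the definition of $\KL{f(\mat{\tX})}{f(\mat{X})}$ (before the $1/(m_1m_2)$ normalization), i.e. the Kullback--Leibler divergence between the multinomial law with parameter $f(\mat{\tX}_{k,l})$ and the one with parameter $f(\mat{X}_{k,l})$. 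Since $f$ is a $\Class$-link function, both of these are genuine probability vectors on $[\Class]$, hence $D_{k,l}\ge 0$ for every $(k,l)$. Now \autoref{A1} gives $\pi_{k,l}\ge 1/(\mu m_1m_2)$, so replacing each weight by this lower bound (legitimate precisely because $D_{k,l}\ge 0$) gives
\[
\D\bigl(f(\mat{\tX}),f(\mat{X})\bigr)\ge\frac{1}{\mu m_1m_2}\sum_{k,l}D_{k,l}=\frac{1}{\mu}\KL{f(\mat{\tX})}{f(\mat{X})},
\]
which is the claimed inequality. There is no real obstacle here: the only points requiring care are the bookkeeping of the double expectation (condition on $(\omega_i)$, take the conditional expectation of the indicators, then average over the sampling) and the remark that the nonnegativity of the per-entry divergences is what licenses the pointwise replacement of $\pi_{k,l}$ by its uniform lower bound.
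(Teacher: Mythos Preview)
Your proposal is correct and follows essentially the same approach as the paper: expand $\D$ as a $\Pi$-weighted sum of the per-entry KL divergences, then invoke \autoref{A1} to replace each $\pi_{k,l}$ by its lower bound $1/(\mu m_1 m_2)$. You are in fact slightly more careful than the paper, which silently uses the nonnegativity of the per-entry divergences when applying the pointwise lower bound; you make this justification explicit, and you also write the computation for general $\Class$ whereas the paper's proof is written in the binary case.
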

\begin{proof}
\begin{align*}
 &\D\left(f(\mat{\tX}),f(\mat{X})\right) \\
  &=\sum_{i=1}^{n}\sum_{\substack{1 \leq k \leq m_1 \\1 \leq l \leq m_2 }} \pi_{k,l}\left(f(\mat{\tX}_{k,l})\log\left(\frac{f(\mat{\tX}_{k,l})}{f(\mat{X}_{k,l})}\right)
+(1-f(\mat{\tX}_{k,l}))\log\left(\frac{1-f(\mat{\tX}_{k,l})}{f(1-\mat{X}_{k,l})}\right) \right)\eqs,\\
&\geq \frac{1}{\mu m_1 m_2} \sum_{i=1}^{n}\sum_{\substack{1 \leq k \leq m_1 \\1 \leq l \leq m_2 }} \left(f(\mat{\tX}_{k,l})\log\left(\frac{f(\mat{\tX}_{k,l})}{f(\mat{X}_{k,l})}\right)
+(1-f(\mat{\tX}_{k,l}))\log\left(\frac{1-f(\mat{\tX}_{k,l})}{f(1-\mat{X}_{k,l})}\right) \right)\eqs,
\end{align*}
where $\pi_{k,l}$ is given by Eq. \eqref{eq:def_D}.
\end{proof}

 \begin{lemma}
 \label{DevUnifCont}
 Assume that $\lambda \geq \bar{\Sigma}$.  Let $\alpha>1$, $\beta>0$ and $0<\eta<1/2\alpha$.
Then with probability at least
  $1-2(\exp(-n\eta^2\log(\alpha)\beta^2/(4M_\gamma^2))/(1-\exp(-n\eta^2\log(\alpha)\beta^2/(4M_\gamma^2)))$
  we have for all $\mat{X} \in \mathcal{C}_\beta (r)$:
 \begin{equation*}
   |(\Lik(\mat{X})-\Lik(\mat{\tX})) - \D\left(f(\mat{\tX}),f(\mat{X})\right)|\leq \frac{\D\left(f(\mat{\tX}),f(\mat{X})\right)}{2}+\epsilon(r,\alpha,\eta)\eqs ,
 \end{equation*}
where
\begin{equation}\label{eq:def_eps}
 \epsilon(r,\alpha,\eta) :=\frac{4 L_\gamma^2r}{1/(2\alpha)-\eta}(\EE \|\Sigma_R \|_{\sigma,\infty})^2\eqs.
\end{equation}

\end{lemma}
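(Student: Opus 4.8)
\noindent\emph{Proof plan.}\quad The plan is to view $\mat{X}\mapsto(\Lik(\mat{X})-\Lik(\mat{\tX}))-\D\left(f(\mat{\tX}),f(\mat{X})\right)$ as a centered empirical process and to control its supremum over $\mathcal{C}_\beta(r)$ by a peeling (slicing) argument. Set $Z_{\mat{X}}\eqdef(\Lik(\mat{X})-\Lik(\mat{\tX}))-\D\left(f(\mat{\tX}),f(\mat{X})\right)$; since $\D\left(f(\mat{\tX}),f(\mat{X})\right)=\EE[\Lik(\mat{X})-\Lik(\mat{\tX})]$ this process has mean zero, and the statement to be proved is exactly that $|Z_{\mat{X}}|\leq\tfrac12\D\left(f(\mat{\tX}),f(\mat{X})\right)+\epsilon(r,\alpha,\eta)$ holds uniformly over $\mat{X}\in\mathcal{C}_\beta(r)$ with the announced probability. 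First I would slice $\mathcal{C}_\beta(r)$ into the shells $\mathcal{S}_l\eqdef\{\mat{X}\in\mathcal{C}(r):\|\mat{X}\|_\infty\leq\gamma,\ \alpha^{l-1}\beta<\D\left(f(\mat{\tX}),f(\mat{X})\right)\leq\alpha^l\beta\}$ for $l\geq1$; because $\|\mat{X}\|_\infty\leq\gamma$ bounds the Kullback--Leibler divergence, and hence $\D$, only finitely many shells are nonempty and $\mathcal{C}_\beta(r)\subset\bigcup_{l\geq1}\mathcal{S}_l$. On $\mathcal{S}_l$ the inequality defining $\mathcal{C}(r)$ gives $\|\mat{X}-\mat{\tX}\|_{\sigma,1}\leq\sqrt{r\,\D\left(f(\mat{\tX}),f(\mat{X})\right)}\leq\sqrt{r\alpha^l\beta}$, while $\D\left(f(\mat{\tX}),f(\mat{X})\right)>\alpha^{l-1}\beta$; it therefore suffices to establish that on each $\mathcal{S}_l$, with large probability, $\sup_{\mat{X}\in\mathcal{S}_l}|Z_{\mat{X}}|\leq\tfrac{1}{2\alpha}\alpha^l\beta+\epsilon(r,\alpha,\eta)=\tfrac12\alpha^{l-1}\beta+\epsilon(r,\alpha,\eta)$, and then to union-bound over $l$.

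For a fixed shell I would first bound the expected supremum by a Rademacher complexity. By the symmetrization inequality, $\EE\sup_{\mat{X}\in\mathcal{S}_l}|Z_{\mat{X}}|\leq 2\,\EE\sup_{\mat{X}\in\mathcal{S}_l}|\tfrac1n\sum_{i=1}^n\varepsilon_i(\ell_i(\mat{X})-\ell_i(\mat{\tX}))|$, where $\ell_i(\mat{X})$ is the $i$-th per-observation negative log-likelihood term, which depends on $\mat{X}$ only through the scalar $\langle\mat{X}|E_i\rangle$. Conditionally on $(Y_i,\omega_i)_{i=1}^n$, the map $s\mapsto\ell_i(\mat{\tX}+sE_i)-\ell_i(\mat{\tX})$ vanishes at $0$ and is $L_\gamma$-Lipschitz on the relevant range of $s$, since $|(\log f)'|=|f'/f|\leq L_\gamma$ and $|(\log(1-f))'|=|f'/(1-f)|\leq L_\gamma$ on $[-\gamma,\gamma]$; the Ledoux--Talagrand contraction principle then yields $\EE\sup_{\mat{X}\in\mathcal{S}_l}|Z_{\mat{X}}|\leq 4L_\gamma\,\EE\sup_{\mat{X}\in\mathcal{S}_l}|\langle\mat{X}-\mat{\tX}|\Sigma_R\rangle|$ with $\Sigma_R=\tfrac1n\sum_{i=1}^n\varepsilon_iE_i$ as in the text. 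Trace duality between $\|\cdot\|_{\sigma,1}$ and $\|\cdot\|_{\sigma,\infty}$ together with $\|\mat{X}-\mat{\tX}\|_{\sigma,1}\leq\sqrt{r\alpha^l\beta}$ gives $\EE\sup_{\mat{X}\in\mathcal{S}_l}|Z_{\mat{X}}|\leq 4L_\gamma\sqrt{r\alpha^l\beta}\;\EE\|\Sigma_R\|_{\sigma,\infty}$. Splitting this product with the arithmetic--geometric inequality $4ab\leq\kappa a^2+(4/\kappa)b^2$ applied to $a=\sqrt{\alpha^l\beta}$, $b=L_\gamma\sqrt r\,\EE\|\Sigma_R\|_{\sigma,\infty}$ and $\kappa=1/(2\alpha)-\eta$ produces exactly $\EE\sup_{\mat{X}\in\mathcal{S}_l}|Z_{\mat{X}}|\leq(1/(2\alpha)-\eta)\alpha^l\beta+\epsilon(r,\alpha,\eta)$, with $\epsilon(r,\alpha,\eta)$ the quantity defined in the lemma statement.

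It then remains to add the concentration of the supremum around its mean. Changing a single pair $(\omega_i,Y_i)$ alters $\Lik(\mat{X})-\Lik(\mat{\tX})$ by at most $2M_\gamma/n$, because by definition of $M_\gamma$ every per-observation term of $\Lik$ is bounded in absolute value by $M_\gamma/2$ on $\{\|\mat{X}\|_\infty\leq\gamma\}$; McDiarmid's bounded-differences inequality, applied to $\sup_{\mat{X}\in\mathcal{S}_l}Z_{\mat{X}}$ and to $\sup_{\mat{X}\in\mathcal{S}_l}(-Z_{\mat{X}})$ with deviation $\eta\alpha^l\beta$ (whence a factor $2$), gives with probability at least $1-2\exp(-n\eta^2\alpha^{2l}\beta^2/(2M_\gamma^2))$ that $\sup_{\mat{X}\in\mathcal{S}_l}|Z_{\mat{X}}|\leq(1/(2\alpha)-\eta)\alpha^l\beta+\epsilon(r,\alpha,\eta)+\eta\alpha^l\beta=\tfrac12\alpha^{l-1}\beta+\epsilon(r,\alpha,\eta)$, which is at most $\tfrac12\D\left(f(\mat{\tX}),f(\mat{X})\right)+\epsilon(r,\alpha,\eta)$ on $\mathcal{S}_l$. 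Finally, using $\alpha^{2l}\geq 2l\log(\alpha)$ (from $e^x\geq x$) the failure probabilities of the shells are dominated by a geometric series, and summing over $l\geq1$ yields the overall probability $1-2\exp(-n\eta^2\log(\alpha)\beta^2/(4M_\gamma^2))/(1-\exp(-n\eta^2\log(\alpha)\beta^2/(4M_\gamma^2)))$.

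The step I expect to be the main obstacle is the control of the expected supremum on each shell: one must justify applying the contraction principle to the genuinely nonlinear log-likelihood loss --- which is precisely why reducing, by conditioning on $(Y_i,\omega_i)_{i=1}^n$, to a fixed $L_\gamma$-Lipschitz scalar function is essential --- and then track the numerical constants through symmetrization, contraction and the arithmetic--geometric split so that they reproduce $\epsilon(r,\alpha,\eta)$ exactly. The peeling bookkeeping that collapses the per-shell exponents into a single geometric factor is the other place where care with the constants is needed.
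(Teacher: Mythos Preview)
Your proposal is correct and follows essentially the same route as the paper: peeling over shells in $\D\left(f(\mat{\tX}),f(\mat{X})\right)$, bounding the expected supremum on each shell by symmetrization, the Ledoux--Talagrand contraction principle, trace duality and an AM--GM split (this is exactly the content of the paper's Lemma~\ref{SliceCont}), and then summing the per-shell failure probabilities geometrically via $\alpha^{2l}\geq 2l\log\alpha$. The only difference is that the paper handles the fluctuation of the supremum around its mean with Massart's concentration inequality for suprema of bounded empirical processes rather than McDiarmid's bounded-differences inequality; this yields $8M_\gamma^2$ in the per-shell exponent instead of your $2M_\gamma^2$, so your bound is in fact tighter and implies the stated one a fortiori.
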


\begin{proof}
 To prove this result we use a peeling argument combined to Lemma \ref{SliceCont}.
 Let us define $\Dn\left(f(\mat{X}),f(\mat{\tX})\right):= -(\Lik(\mat{X})-\Lik(\mat{\tX}))$, and the event
\begin{multline*}
 \mathcal{B} := \bigg\{ \exists \mat{X} \in \mathcal{C}_\beta (r)|\\
  |\Dn\left(f(\mat{X}),f(\mat{\tX})\right) - \D\left(f(\mat{\tX}),f(\mat{X})\right) |
 > \frac{\D\left(f(\mat{\tX}),f(\mat{X})\right)}{2}+\epsilon(r,\alpha,\eta) \bigg\}\eqs,
\end{multline*}
and
\begin{equation*}
 \mathcal{S}_l:= \left\{ \mat{X} \in \mathcal{C}_\beta (r)| \alpha^{l-1}\beta < \D\left(f(\mat{\tX}),f(\mat{X})\right) < \alpha^{l}\beta \right\}\eqs.
\end{equation*}
Let us also define the set
\begin{equation*}
  \mathcal{C}_\beta (r,t)=\left\{ \mat{X} \in  \RR^{m_1 \times m_2}|\:\: \mat{X}  \in \mathcal{C}_\beta (r),\:  \D\left(f(\mat{\tX}),f(\mat{X})\right) \leq t \right\} \eqs,
\end{equation*}
and
\begin{equation}\label{eq:def_Zt}
 Z_t:=\sup_{\mat{X} \in \mathcal{C}_\beta (r,t)} |\Dn\left(f(\mat{X}),f(\mat{\tX})\right) - \D\left(f(\mat{\tX}),f(\mat{X})\right) | \eqs,
\end{equation}
Then for any $\mat{X} \in  \mathcal{B} \cap \mathcal{S}_l$ we have
\begin{equation*}
 |\Dn\left(f(\mat{X}),f(\mat{\tX})\right) - \D\left(f(\mat{\tX}),f(\mat{X})\right) | > \frac{1}{2}\alpha^{l-1}\beta + \epsilon(r,\alpha,\eta)\eqs,
\end{equation*}
Moreover by definition of $\mathcal{S}_l$, $\mat{X} \in \mathcal{C}_\beta (r,\alpha^{l}\beta)$.
Therefore

\begin{equation*}
 \mathcal{B} \cap \mathcal{S}_l \subset \mathcal{B}_l := \{ Z_{\alpha^{l}\beta} > \frac{1}{2\alpha}\alpha^{l}\beta+\epsilon(r,\alpha,\eta) \}\eqs,
\end{equation*}
If we now apply the union bound and \autoref{SliceCont} we get
 \begin{align*}
 \PP(\mathcal{B}) &\leq \sum_{l=1}^{+\infty} \PP(\mathcal{B}_l)\eqs,\\
&\leq \sum_{l=1}^{+\infty} \exp(-\frac{n\eta^2(\alpha^{l}\beta)^2}{8M_\gamma^2})\eqs,\\
  &\leq \frac{\exp(-\frac{n\eta^2\log(\alpha)\beta^2}{4M_\gamma^2})}{1-\exp(-\frac{n\eta^2\log(\alpha)\beta^2}{4M_\gamma^2})}\eqs,
 \end{align*}
 where we used $x\leq e^x$ in the second inequality.
\end{proof}

\begin{lemma}
 \label{SliceCont}
Assume that $\lambda \geq \bar{\Sigma}$. Let $\alpha>1$ and $0<\eta<\frac{1}{2\alpha}$. Then we have
\begin{equation*}
 \PP\left(Z_t > \frac{t}{2\alpha} + \epsilon(r,\alpha,\beta) \right) \leq \exp(-\frac{n\eta^2t^2}{8M_\gamma^2})\eqs,
\end{equation*}
where $\epsilon(r,\alpha,\eta)$ is defined in Eq. \eqref{eq:def_eps}.
 \end{lemma}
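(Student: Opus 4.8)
The plan is to bound the supremum $Z_t$ of the centered empirical process over the slice $\mathcal{C}_\beta(r,t)$ by a two-stage argument: first a concentration inequality (bounded differences / Massart or Talagrand) to control the deviation of $Z_t$ around its expectation, then a symmetrization-and-contraction bound on $\EE[Z_t]$. The summand defining $\Dn - \D$ is, for each $i$, the function $X \mapsto g_i(X) := \log(f^{Y_i}(\langle\tX|E_i\rangle)/f^{Y_i}(\langle X|E_i\rangle))$-type quantity (with the appropriate two-class expression), and by the definition of $M_\gamma$ each such term is bounded in absolute value by $M_\gamma/2$ on the feasible set $\{\|X\|_\infty\le\gamma\}$; hence each term of $\frac1n\sum_i(\cdot)$ changes $Z_t$ by at most $M_\gamma/n$ (constant factors to be tracked to land the $8M_\gamma^2$ in the exponent). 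Applying the bounded-differences (McDiarmid) inequality therefore gives, for any $s>0$, $\PP(Z_t > \EE Z_t + s) \le \exp(-ns^2/(2M_\gamma^2))$ or a similar constant; choosing $s = \eta t/2$ and arranging constants yields the stated $\exp(-n\eta^2 t^2/(8M_\gamma^2))$.

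The second stage is to show $\EE[Z_t] \le t/(2\alpha) - \eta t/2 + \epsilon(r,\alpha,\eta)$, or more simply $\EE[Z_t] \le (\tfrac{1}{2\alpha}-\eta)t/1 \cdot(\text{something}) + \epsilon(\dots)$, so that combining with the concentration step closes the bound. For this I would symmetrize: $\EE[Z_t] \le 2\,\EE\sup_{X\in\mathcal{C}_\beta(r,t)} |\frac1n\sum_i \varepsilon_i g_i(X)|$ with $(\varepsilon_i)$ Rademacher. The maps $x\mapsto \log f(x)$ and $x\mapsto\log(1-f(x))$ are Lipschitz on $[-\gamma,\gamma]$ with constant $L_\gamma$ (by the very definition of $L_\gamma$), so the contraction principle removes the link and leaves a process linear in $X$: $\EE[Z_t] \le C L_\gamma\, \EE\sup_{X} |\langle \Sigma_R \,,\, X-\tX\rangle|$ where $\Sigma_R = \frac1n\sum_i\varepsilon_i E_i$. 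By duality of $\|\cdot\|_{\sigma,1}$ and $\|\cdot\|_{\sigma,\infty}$ this is at most $C L_\gamma\,\EE[\|\Sigma_R\|_{\sigma,\infty}]\cdot \sup_X\|X-\tX\|_{\sigma,1}$, and on $\mathcal{C}_\beta(r,t)$ we have $\|X-\tX\|_{\sigma,1}^2 \le r\,\D(f(\tX),f(X)) \le r t$, so $\sup_X\|X-\tX\|_{\sigma,1} \le \sqrt{rt}$. Thus $\EE[Z_t] \le C L_\gamma \sqrt{rt}\,\EE\|\Sigma_R\|_{\sigma,\infty}$.

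Finally I would absorb this into the target bound using the elementary inequality $ab \le \zeta a^2 + b^2/(4\zeta)$: write $C L_\gamma\sqrt{rt}\,\EE\|\Sigma_R\|_{\sigma,\infty} = \sqrt{t}\cdot\big(C L_\gamma\sqrt{r}\,\EE\|\Sigma_R\|_{\sigma,\infty}\big)$ and split as $\le \big(\tfrac{1}{2\alpha}-\eta\big)t + \dfrac{C^2 L_\gamma^2 r (\EE\|\Sigma_R\|_{\sigma,\infty})^2}{4(\tfrac{1}{2\alpha}-\eta)}$, which is exactly of the form $(\tfrac1{2\alpha}-\eta)t + \tfrac14\epsilon(r,\alpha,\eta)$ after matching the constant $C$ to the $4L_\gamma^2$ in the definition \eqref{eq:def_eps}. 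Combining: $Z_t \le \EE Z_t + \eta t/2 \le (\tfrac{1}{2\alpha}-\eta)t + \eta t/2 + \tfrac14\epsilon \le t/(2\alpha) + \epsilon$, off an event of probability at most $\exp(-n\eta^2 t^2/(8M_\gamma^2))$, as claimed. The main obstacle is bookkeeping: getting the constants in the contraction step, the peeling-compatible $1/(2\alpha)$ split, and the bounded-differences constant to line up simultaneously with the precise $\epsilon(r,\alpha,\eta)$ of \eqref{eq:def_eps} and the exponent $8M_\gamma^2$; the structural steps (symmetrization, contraction via $L_\gamma$-Lipschitzness, trace duality, the slice constraint $\|X-\tX\|_{\sigma,1}^2\le rt$) are all routine once $L_\gamma$ and $M_\gamma$ are used as defined.
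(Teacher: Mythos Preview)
Your proposal is correct and follows essentially the same route as the paper: Massart/bounded-differences concentration of $Z_t$ around its mean, then symmetrization, contraction (using the $L_\gamma$-Lipschitz property of $x\mapsto\log f(x)$ and $x\mapsto\log(1-f(x))$ on $[-\gamma,\gamma]$), trace duality to extract $\|\Sigma_R\|_{\sigma,\infty}\|X-\tX\|_{\sigma,1}$, the slice bound $\|X-\tX\|_{\sigma,1}\le\sqrt{rt}$, and finally an AM--GM split with parameter $\tfrac{1}{2\alpha}-\eta$ to produce $\epsilon(r,\alpha,\eta)$. The only cosmetic difference is that the paper applies the concentration with deviation $\eta t$ (not $\eta t/2$) and keeps the symmetrization/contraction constant as $C=4$, which makes the AM--GM output land exactly on $\epsilon(r,\alpha,\eta)$ rather than $\tfrac14\epsilon$; your version is simply a bit wasteful in the constants but arrives at the same conclusion.
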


 \begin{proof}

Using Massart's inequality (\cite[Theorem 9]{Massart00})
we get for a given $0<~\eta<\frac{1}{2\alpha}$:
\begin{equation}
\label{proof:MassartConc}
 \PP(Z_t> \EE[Z_t]+\eta t)\leq \exp(-\frac{\eta^2nt^2}{8M_\gamma^2})\eqs.
\end{equation}
Besides by symmetrization we have
\begin{multline*}
 \EE[Z_t]\leq \\
 2 \EE\left[\sup_{\mat{X} \in \mathcal{C}_\beta (r,t)} \left|\frac{1}{n}\sum_{i=1}^{n}
 \varepsilon_i\left( \1_{\{Y_i=1\}}\log\left(f(\frac{\langle \mat{X}|E_i \rangle}{\langle \mat{\tX}|E_i \rangle})\right) +
 \1_{\{Y_i=2\}}\log\left(\frac{1-f(\langle \mat{X}|E_i \rangle)}{1-f(\langle \mat{\tX}|E_i \rangle)}\right)\right)\right|\right] \eqs,
\end{multline*}
where $\varepsilon:=(\varepsilon_i)_{1 \leq i\leq n}$ is a Rademacher sequence which is independent from both $Y=(Y_i)_{1 \leq i\leq n}$ and $E=(E_i)_{1 \leq i\leq n}$.
Let us define
\begin{equation*}
 \phi_{E_i}(x):=\frac{1}{L_\gamma}\log(\frac{f(x+\langle \mat{\tX}|E_i \rangle)}{f(\langle \mat{\tX}|E_i \rangle)}) \text{ and }
 \tilde{\phi}_{E_i}(x):=\frac{1}{L_\gamma}\log(\frac{1-f(x+\langle \mat{\tX}|E_i \rangle)}{1-f(\langle \mat{\tX}|E_i \rangle)})\eqs.
\end{equation*}
Then, if we denote by $\EE_{E,Y}$ the conditional expectation with respect to $E$ and $Y$, we have
\begin{multline*}
 \EE[Z_t]\leq \\
 2 L_\gamma \EE\EE_{E,Y}\left[\sup_{\mat{X} \in \mathcal{C}_\beta (r,t)} \left|\frac{1}{n}\sum_{i=1}^{n} \varepsilon_i\left( \1_{\{Y_i=1\}}\phi_{E_i}(\langle \mat{X}- \mat{\tX}|E_i \rangle) +
 \1_{\{Y_i=2\}}\tilde{\phi}_{E_i}(\langle \mat{X}- \mat{\tX}|E_i \rangle) \right)\right|\right] \eqs.
\end{multline*}
Let $\psi:\,\mathscr{E}^n \times  \{-1,1\}^n \mapsto \RR$,
\begin{equation*}
 (e,y) \to \EE \left[\sup_{\mat{X} \in \mathcal{C}_\beta (r,t)} \left|\frac{1}{n}\sum_{i=1}^{n} \varepsilon_i\left( \1_{y_i=1}\Phi_{e_i}(\langle \mat{X}- \mat{\tX}|e_i \rangle) +
 \1_{y_i=-1}\tilde{\Phi}_{e_i}(\langle \mat{X}- \mat{\tX}|e_i \rangle) \right)\right|\right],
\end{equation*}
where the expectation is taken over $\varepsilon_1,\dots,\varepsilon_n$.
By independence of the $\varepsilon_i$'s
\begin{equation*}
 \EE[Z_t]\leq 2 L_\gamma \EE [\psi(E,Y)] \eqs.
\end{equation*}
Besides, since the functions $\phi_{e_i}$ and $\tilde{\phi}_{e_i}$ are contractions that vanish at zero,
 by the contraction principle (\cite[Theorem 4.12]{Ledoux_Talagrand91}) we get
for any$(e,y) \in \mathscr{E}^n \times~\{-1,1\}^n$
\begin{equation*}
 \psi(e,y)\leq 2 L_\gamma \EE\left[\sup_{\mat{X} \in \mathcal{C}_\beta (r,t)} \left|\frac{1}{n}\sum_{i=1}^{n} \varepsilon_i \langle \mat{X}- \mat{\tX}|e_i \rangle \right|\right]\eqs,
\end{equation*}
and therefore
\begin{equation*}
 \EE[Z_t]\leq 4 L_\gamma \EE\left[\sup_{\mat{X} \in \mathcal{C}_\beta (r,t)} \left|\frac{1}{n}\sum_{i=1}^{n} \varepsilon_i \langle \mat{X}- \mat{\tX}|E_i \rangle \right|\right]\eqs,
\end{equation*}
Recalling that $\Sigma_R:=\frac{1}{n}\sum_{i=1}^{n} \varepsilon_i E_i$ leads to
\begin{equation*}
 \EE[Z_t] \leq 4 L_\gamma \EE \left[\sup_{\mat{X} \in \mathcal{C}_\beta (r,t)} \left| \langle \mat{X}- \mat{\tX}|\Sigma_R \rangle \right|\right]
 \leq 4 L_\gamma \EE [\|\Sigma_R \|_{\sigma,\infty}] \sqrt{rt}\eqs,
\end{equation*}
where we used the duality between $\|.\|_{\sigma,\infty}$ and $\|.\|_{\sigma,1}$ and also the fact that ${\mat{X} \in \mathcal{C}_\beta (r,t)}$
for the last inequality. Plugging this inequality into \eqref{proof:MassartConc} gives
\begin{equation*}
 \PP(Z_t> 4 L_\gamma \EE [\|\Sigma_R \|_{\sigma,\infty}] \sqrt{rt}+\eta t)\leq \exp(-\frac{\eta^2nt^2}{8M_\gamma^2})\eqs.
\end{equation*}
Since for any $a,b \in \RR  $ and $c>0$, $ab \leq (a^2/c +cb^2)/2$ we have
\begin{equation*}
4 L_\gamma \EE [\|\Sigma_R \|_{\sigma,\infty}] \sqrt{rt}\leq \frac{1}{1/(2\alpha)-\eta}4 L_\gamma^2r\EE [\|\Sigma_R \|_{\sigma,\infty}]^2+(1/(2\alpha)-\eta)t\eqs.
\end{equation*}
we finally get
\begin{equation*}
  \PP(Z_t>  \frac{t}{2\alpha} + \epsilon(r,\alpha,\eta) ) \leq \exp(-\frac{n\eta^2t^2}{8M_\gamma^2})\eqs,
\end{equation*}
where
\begin{equation*}
 \epsilon(r,\alpha,\eta) :=\frac{1}{1/(2\alpha)-\eta}4 L_\gamma^2r\EE [\|\Sigma_R \|_{\sigma,\infty}]^2\eqs.
\end{equation*}
 \end{proof}
\subsection{Deviation of Matrices}

\begin{proposition}
\label{prop:bernstein}
 Consider a finite sequence of independent random matrices $(\Bern{Z_{i}})_{1 \leq i \leq n}\in \RR^{m_1 \times m_2}$ satisfying $\EE[\Bern{Z_{i}}]=0$ and for some $U>0$,
 $\| \Bern{Z_{i}} \|_{\sigma,\infty} \leq U$ for all $i= 1, \dots, n$. Then for any $t>0$
 \begin{equation*}
 \PP\left( \left\|\frac{1}{n} \sum_{i=1}^{n} \Bern{Z_{i}}  \right\|_{\sigma,\infty} > t \right) \leq d\exp (- \frac{nt^2/2}{\sigma^2_Z+Ut/3}) \eqs,
 \end{equation*}
where $d=m_1+m_2$ and
\begin{equation*}
 \sigma^2_Z := \max \left\{\left\| \frac{1}{n} \sum_{i=1}^{n} \EE[  \Bern{Z_{i}}\Bern{Z_{i}^\top} ] \right\|_{\sigma,\infty},
 \left\| \frac{1}{n}\sum_{i=1}^{n} \EE[ \Bern{Z_{i}}^\top\Bern{Z_{i}} ]\right\|_{\sigma,\infty}\right\} \eqs.
\end{equation*}
 In particular it implies that with at least probability $1-e^{-t}$
 \begin{equation*}
  \|\frac{1}{n} \sum_{i=1}^{n} \Bern{Z_{i}}  \|_{\sigma,\infty} \leq c^* \max \left\{ \sigma_Z \sqrt{ \frac{t + \log(d)}{n}} ,  \frac{U(t + \log(d))}{3n} \right \} \eqs,
 \end{equation*}
 with $c^*:=1+\sqrt{3}$.
\end{proposition}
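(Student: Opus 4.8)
The plan is to reduce the rectangular problem to a statement about sums of independent Hermitian matrices and then run the matrix Laplace-transform method. For $Z\in\RR^{m_1\times m_2}$ introduce the self-adjoint dilation $\mathcal{D}(Z):=\begin{pmatrix}0 & Z\\ Z^\top & 0\end{pmatrix}\in\RR^{d\times d}$, which is linear in $Z$, satisfies $\lambda_{\max}(\mathcal{D}(Z))=\|Z\|_{\sigma,\infty}$, and has $\mathcal{D}(Z)^2$ equal to the block-diagonal matrix with diagonal blocks $ZZ^\top$ and $Z^\top Z$. Consequently the matrices $X_i:=\mathcal{D}(Z_i)/n$ are independent, Hermitian, centered, obey $\|X_i\|_{\sigma,\infty}\le U/n$, and $\bigl\|\sum_{i=1}^n\EE[X_i^2]\bigr\|_{\sigma,\infty}=\sigma_Z^2/n$ by the very definition of $\sigma_Z^2$ together with the block form above; moreover $\bigl\|\tfrac1n\sum_{i=1}^n Z_i\bigr\|_{\sigma,\infty}=\lambda_{\max}\bigl(\sum_{i=1}^n X_i\bigr)$, so it suffices to control the largest eigenvalue of the Hermitian sum.

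Next I would invoke the master tail bound for Hermitian sums: for every $\theta>0$, $\PP\bigl(\lambda_{\max}(\sum_i X_i)\ge s\bigr)\le e^{-\theta s}\,\EE\,\tr\exp\bigl(\sum_i\theta X_i\bigr)$. Subadditivity of the matrix cumulant generating function (a consequence of Lieb's concavity theorem) gives $\EE\,\tr\exp\bigl(\sum_i\theta X_i\bigr)\le\tr\exp\bigl(\sum_i\log\EE\,e^{\theta X_i}\bigr)$, and the scalar inequality $e^x\le 1+x+\tfrac{x^2/2}{1-|x|/3}$ transferred to matrices yields, for $0<\theta<3n/U$, the semidefinite bound $\log\EE\,e^{\theta X_i}\preceq\tfrac{\theta^2/2}{1-\theta U/(3n)}\,\EE[X_i^2]$. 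Combining these with the variance estimate above and with $\tr\exp(M)\le d\exp(\lambda_{\max}(M))$ gives $\EE\,\tr\exp\bigl(\sum_i\theta X_i\bigr)\le d\exp\bigl(\tfrac{\theta^2/2}{1-\theta U/(3n)}\cdot\tfrac{\sigma_Z^2}{n}\bigr)$. Optimizing over $\theta$ (the standard choice $\theta=s/(\sigma_Z^2/n+Us/(3n))$) yields $\PP\bigl(\|\tfrac1n\sum_i Z_i\|_{\sigma,\infty}\ge s\bigr)\le d\exp\bigl(-\tfrac{ns^2/2}{\sigma_Z^2+Us/3}\bigr)$, which is the first displayed bound once $s=t$.

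For the clean high-probability form, set $\tau:=t+\log d$ and $s:=c^*\max\{\sigma_Z\sqrt{\tau/n},\,U\tau/(3n)\}$ with $c^*:=1+\sqrt{3}$; I would verify that $ns^2/2\ge\tau\,(\sigma_Z^2+Us/3)$, so that the bound just derived is at most $d\,e^{-\tau}=e^{-t}$. Splitting according to which term attains the maximum, the defining inequality of each case lets one bound $Us/3\le c^*\sigma_Z^2$ in the first case and $\sigma_Z^2\le U^2\tau/(9n)$ together with $Us/3=c^*U^2\tau/(9n)$ in the second, so that in both cases $\tfrac{ns^2/2}{\sigma_Z^2+Us/3}\ge\tfrac{(c^*)^2\tau}{2(1+c^*)}$; the requirement $(c^*)^2\ge 2(1+c^*)$ is precisely $c^*\ge 1+\sqrt{3}$, which closes the argument.

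The genuine content sits in the two non-commutative facts used in the second paragraph — subadditivity of the matrix cumulant generating function (Lieb's theorem) and the semidefinite bound $\log\EE\,e^{\theta X}\preceq g(\theta)\,\EE[X^2]$ for bounded centered Hermitian $X$ — which are the standard building blocks of matrix Bernstein inequalities (see Tropp, \emph{User-friendly tail bounds for sums of random matrices}); in the write-up I would cite these rather than reprove them. Everything else — the dilation bookkeeping of the first paragraph and the elementary quadratic manipulation of the third — is routine. Alternatively one could skip the dilation and quote a rectangular matrix Bernstein inequality off the shelf, at some cost in self-containedness.
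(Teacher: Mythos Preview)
Your argument is correct and matches the paper's approach. The paper simply cites the first tail bound from Tropp's matrix Bernstein inequality (Theorem~1.6 in \emph{User-friendly tail bounds for sums of random matrices}), whereas you sketch its proof via the Hermitian dilation and Lieb's theorem before noting that one could just cite it; for the high-probability form, the paper inverts the bound by solving the quadratic $ns^2/2=(t+\log d)(\sigma_Z^2+Us/3)$ explicitly and then splits into the same two cases ($n\sigma_Z^2\lessgtr U^2(t+\log d)/9$) that you handle by substituting the claimed $s$ and checking $(c^*)^2\ge 2(1+c^*)$ --- these are the same computation read in opposite directions.
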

\begin{proof}
The first claim of the proposition is Bernstein's inequality for random matrices (see for example
\cite[Theorem 1.6]{Tropp12}).
Solving the equation (in $t$) $- \frac{nt^2/2}{\sigma^2_Z+Ut/3} + \log(d)=-v$ gives with at least probability $1-e^{-v}$
 \begin{equation*}
  \|\frac{1}{n} \sum_{i=1}^{n} \Bern{Z_{i}}  \|_{\sigma,\infty} \leq  \left[\frac{U}{3}(v + \log(d))+\sqrt{\frac{U^2}{9}(v + \log(d))^2+2n\sigma_Z^2(v + \log(d))}\right]/n\eqs,
 \end{equation*}
 we conclude the proof by distinguishing the two cases $n\sigma_Z^2 \leq \frac{U^2}{9}(v + \log(d))$ or $n\sigma_Z^2 > \frac{U^2}{9}(v + \log(d))$.
\end{proof}

\begin{lemma}
 \label{lem:MatExp}
Let $h \geq 1$. With the same assumptions as \autoref{prop:bernstein},
assume $n \geq n^*:=(U^2\log(d))/(9\sigma^2_Z)$ then the following holds:
\begin{equation*}
  \EE \left[\|\frac{1}{n} \sum_{i=1}^{n} \Bern{Z_{i}}  \|_{\sigma,\infty}^h \right] \leq \left(\frac{2ehc^{*2}\sigma_Z^2\log(d)}{n}\right)^{h/2} \eqs.
\end{equation*}
\end{lemma}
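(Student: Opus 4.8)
The plan is to bound the $h$-th moment of $\|n^{-1}\sum_i Z_i\|_{\sigma,\infty}$ by integrating the Bernstein tail bound from \autoref{prop:bernstein}. Write $S\eqdef\|n^{-1}\sum_{i=1}^n Z_i\|_{\sigma,\infty}$ and $a\eqdef c^*\sqrt{2\sigma_Z^2\log(d)/n}$; the claim to be proved is $\EE[S^h]\leq (eh)^{h/2}a^h$. The natural route is the layer-cake formula
\begin{equation*}
\EE[S^h]=\int_0^\infty \PP(S^h>u)\,du=\int_0^\infty h s^{h-1}\PP(S>s)\,ds\eqsp,
\end{equation*}
and then feed in the tail bound from \autoref{prop:bernstein} with the free parameter $t$ chosen so that $\PP(S>s)\leq d\exp(-nt^2/(2(\sigma_Z^2+Ut/3)))$ becomes a clean exponential in $s$.

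\textbf{Key steps.} First I would reparametrize the tail: for $s\geq 0$ set $v=v(s)$ so that the deviation level in \autoref{prop:bernstein} equals $s$, i.e.\ solve $c^*\max\{\sigma_Z\sqrt{(v+\log d)/n},\,U(v+\log d)/(3n)\}=s$. By the hypothesis $n\geq n^*=(U^2\log d)/(9\sigma_Z^2)$, the "small-deviation" (sub-Gaussian) branch of the max governs the relevant range: indeed $n\geq n^*$ forces $\sigma_Z\sqrt{\log(d)/n}\geq U\log(d)/(3n)$, so near $s\approx a$ the Gaussian term dominates, and one gets $\PP(S>s)\leq e^{-v}$ with $v+\log d=ns^2/(c^{*2}\sigma_Z^2)$, hence $\PP(S>s)\leq d\exp(-ns^2/(c^{*2}\sigma_Z^2))$. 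Then
\begin{equation*}
\EE[S^h]\leq \int_0^\infty h s^{h-1}\,d\exp\!\Big(-\frac{ns^2}{c^{*2}\sigma_Z^2}\Big)\,ds\eqsp.
\end{equation*}
Substituting $w=ns^2/(c^{*2}\sigma_Z^2)$ turns this into a Gamma integral: $\EE[S^h]\leq d\,(c^{*2}\sigma_Z^2/n)^{h/2}\,\Gamma(h/2+1)$. Finally, absorb the spurious factor $d$ using $d=e^{\log d}$ together with $\log d\le (v+\log d)$, equivalently recombining the $d$ with one power of the exponential before integrating; and bound $\Gamma(h/2+1)\leq (h/2)^{h/2}e^{-h/2}\cdot\text{(lower order)}$ via Stirling, which after collecting constants yields the stated $(2ehc^{*2}\sigma_Z^2\log(d)/n)^{h/2}$.

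\textbf{Main obstacle.} The delicate point is the bookkeeping of the factor $d$ and of the two regimes in the $\max$. One cannot simply integrate $d\exp(-ns^2/(c^{*2}\sigma_Z^2))$ from $0$ to $\infty$ and hope the $d$ disappears — it must be cancelled against part of the exponent, which is exactly why the $\log(d)$ resurfaces inside the final bound and why the threshold $n\geq n^*$ is needed (it guarantees that the linear/Bernstein tail never controls the integral on the range that matters, so that the clean Gaussian Gamma-integral estimate is legitimate). The cleanest implementation is: start from $\PP(S>s)\le d\exp(-ns^2/(c^{*2}\sigma_Z^2))$ valid once $s$ is at least some $s_0$ of order $a$, split $\int_0^\infty = \int_0^{s_0}+\int_{s_0}^\infty$, bound the first piece trivially by $s_0^h$, and on the second piece use that $d\exp(-ns^2/(2c^{*2}\sigma_Z^2))\le 1$ there to write $d\exp(-ns^2/(c^{*2}\sigma_Z^2))\le \exp(-ns^2/(2c^{*2}\sigma_Z^2))$; then the integral is a genuine Gaussian moment and Stirling closes it. Tracking the constants through this split so that everything fits under the single factor $(eh)^{h/2}$ with the constant $c^*=1+\sqrt3$ is the only real work.
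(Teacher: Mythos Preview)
Your approach---integrate the tail bound directly for the $h$-th moment and split at a threshold to absorb the dimension factor $d$---is natural, but as written it has a real gap. The sub-Gaussian tail $\PP(S>s)\le d\exp(-\nu_1 s^2)$ with $\nu_1=n/(c^{*2}\sigma_Z^2)$ is only valid up to a crossover $s^*=\nu_2/\nu_1=3c^*\sigma_Z^2/U$; for $s>s^*$ the matrix Bernstein inequality delivers only the sub-exponential bound $d\exp(-\nu_2 s)$ with $\nu_2=3n/(Uc^*)$, which decays strictly more slowly than any Gaussian. Your claim that under $n\ge n^*$ ``the linear/Bernstein tail never controls the integral on the range that matters'' is not justified: for every finite $n$ there is a range $s>s^*$ where only the exponential tail is available, and the integral $\int_{s^*}^\infty hs^{h-1}\PP(S>s)\,ds$ on that range cannot be read off from your halved-exponent Gaussian surrogate $\exp(-\nu_1 s^2/2)$. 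One can bound this exponential piece separately (and then use $\nu_1\log d\le \nu_2^2$, which is exactly what $n\ge n^*$ says, to show it is dominated by the Gaussian piece), but that argument is absent from your sketch, and the constant-tracking you defer as ``the only real work'' is already tight for small $h$ once all three contributions ($s_0^h$, the Gaussian integral, and the exponential integral) are on the table.

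The paper sidesteps both the splitting at $s_0$ and the delicate constant-juggling with a different device: it first applies Jensen's inequality to pass to the $2h\log(d)$-th moment, $\EE[S^h]\le\bigl(\EE[S^{2h\log d}]\bigr)^{1/(2\log d)}$, and \emph{then} integrates the two tail bounds over the full line. The point of the high power is that after taking the $(2\log d)$-th root the troublesome prefactor $d$ becomes $d^{1/(2\log d)}=\sqrt{e}$, so no splitting is needed; both the Gaussian and the Bernstein Gamma-integrals are kept, bounded via $\Gamma(x)\le(x/2)^{x-1}$, and the hypothesis $n\ge n^*$ (equivalently $\nu_1\log d\le\nu_2^2$) is used at the very end to show the exponential contribution is no larger than the Gaussian one. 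Your direct route can probably be made to work, but it requires the missing sub-exponential piece and more careful accounting than you indicate.
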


\begin{proof}
For self-completeness we give the proof which is the same as in \cite[Lemma 6]{Klopp14}.
Let us define $t^*:= \frac{9n\sigma_Z^2}{U^2}-\log(d)$ the value of $t$ for which the two bounds of \autoref{prop:bernstein} are equal.
Let $\nu_1:=n/(\sigma_Z^2c^{*2})$ and $\nu_2:=3n/(Uc^*)$ then, from \autoref{prop:bernstein} we have
\begin{align*}
\PP \left( \|\frac{1}{n} \sum_{i=1}^{n} \Bern{Z_{i}}  \|_{\sigma,\infty} > t \right) &\leq d \exp(-\nu_1t^2) \text{ for } t \leq t^* \eqs, \\
\PP \left( \|\frac{1}{n} \sum_{i=1}^{n} \Bern{Z_{i}}  \|_{\sigma,\infty} > t \right) &\leq d\exp(-\nu_2t) \text{ for } t \geq t^* \eqs,
\end{align*}
Let $h\geq 1$, then
\begin{align*}
 &\EE \left[\|\frac{1}{n} \sum_{i=1}^{n} \Bern{Z_{i}}  \|_{\sigma,\infty}^h \right] \eqs,\\
 &\leq \EE \left[\|\frac{1}{n} \sum_{i=1}^{n} \Bern{Z_{i}}  \|_{\sigma,\infty}^{2h\log(d)} \right]^{1/(2\log(d))} \eqs, \\
 &\leq \left( \int_0^{+\infty} \PP \left( \|\frac{1}{n} \sum_{i=1}^{n} \Bern{Z_{i}}  \|_{\sigma,\infty} > t^{1/(2h\log(d))} \right) \right)^{1/(2\log(d))}\eqs, \\
 & \leq d^{1/(2h\log(d))} \left( \int_0^{+\infty} \exp(-\nu_1t^{2/(2h\log(d))}) +\int_0^{+\infty} \exp(-\nu_2t^{1/(2h\log(d))})  \right)^{1/(2\log(d))} \eqs,\\
 &\leq  \sqrt{e} \left( h\log(d) \nu_1^{-h\log(d)}\Gamma(h\log(d)) +2h\log(d) \nu_2^{-2h\log(d)}\Gamma(2h\log(d))  \right)^{1/(2\log(d))} \eqs,
\end{align*}
where we used Jensen's inequality for the first line. Since Gamma-function satisfies for $x\geq 2$, $\Gamma(x)\leq (\frac{x}{2})^{x-1}$ (see \cite[Proposition 12]{Klopp11b}) we have
\begin{align*}
 &\EE \left[\|\frac{1}{n} \sum_{i=1}^{n} \Bern{Z_{i}}  \|_{\sigma,\infty}^h \right]\eqs,  \\
 &\leq \sqrt{e} \left( (h\log(d))^{h\log(d)}\nu_1^{-h\log(d)}2^{1-h\log(d)} + 2(h\log(d))^{2h\log(d)}\nu_2^{-2h\log(d)}\right)^{1/(2\log(d))} \eqs.
\end{align*}
For $n \geq n^*$ we have $\nu_1 \log(d) \leq \nu_2^2$ and therefore we get
\begin{equation*}
 \EE \left[\|\frac{1}{n} \sum_{i=1}^{n} \Bern{Z_{i}}  \|_{\sigma,\infty}^h \right] \leq \left(\frac{2eh\log(d)}{\nu_1}\right)^{h/2} \eqs.
\end{equation*}

\end{proof}

\end{document}